\newtheorem{theorem}{Theorem}[section]
\newtheorem{lemma}[theorem]{Lemma}
\newtheorem{proposition}[theorem]{Proposition}
\newtheorem{corollary}[theorem]{Corollary}
\theoremstyle{definition}
\theoremstyle{definitions}
\newtheorem{definition}[theorem]{Definition}
\newtheorem{remark}[theorem]{Remark}
\newtheorem{example}[theorem]{Example}
\theoremstyle{notations}
\theoremstyle{remarks}
\newcommand{\lo}{\longrightarrow}
\newcommand{\wt}{\widetilde}
\newcommand{\al}{\alpha}
\newcommand{\bt}{\beta}
\journal{ }
\begin{document}
\begin{frontmatter}
\title{On h-Fibrations}

\author[]{Mehdi~Tajik$^1$}
\ead{mm.tj@stu.um.ac.ir}
\author[]{Behrooz~Mashayekhy$^{1,}$\corref{cor1}}
\ead{bmashf@um.ac.ir}
\author[]{Ali~ Pakdaman$^2$ }
\ead{a.pakdaman@gu.ac.ir}

\address{$^1$Department of Pure Mathematics, Center of Excellence in Analysis on Algebraic Structures,\\ Ferdowsi University of Mashhad,
P.O.Box 1159-91775, Mashhad, Iran.}
\address{$^2$Department of Mathematics, Faculty of Science, University of Golestan,\\
P.O.Box 155, Gorgan, Iran.}
\cortext[cor1]{Corresponding author}
\begin{abstract}
In this paper, we study h-fibrations, a weak homotopical version of fibrations which have weak covering homotopy property. We present some homotopical analogue of the notions related to fibrations and characterize h-fibrations using them. Then we construct some new categories by h-fibrations and deduce some results in these categories such as the existence of products and coproducts.
\end{abstract}

\begin{keyword}
fiber homotopy\sep weak covering homotopy property\sep h-fibration\sep homotopically path lifting.
\MSC[2010]{55R65, 55R05, 55R35, 57M12.}
\end{keyword}
\end{frontmatter}
\section{Introduction}
\subsection{Motivation}
A map $p:E\to B$ is said to be a fibration (Hurewicz fibration) if it has covering homotopy property with respect to every space, that is, for every space $X$, every map $\wt f:X\to E$ and every homotopy $F:X\times I\to B$ with $p\circ \widetilde{f}=F\circ J_0$, there exists a homtopy $\widetilde{F}:X\times I\to E$ such that $p\circ \widetilde{F}=F$ and $\widetilde{F}\circ J_0=\widetilde{f}$, where $J_0:X\to X\times I$ is $J_0(x)=(x,0)$.

Covering homotopy property is not invariant under fiber homotopy equivalence and hence any fiber homotopic map to a fibration is not necessarily a fibration. E. Fadell \cite{FA} introduced a new type of fibrations which do not have this defect.
 Also, Dold \cite{DO} considered a weak version of covering homotopy property introduced by Fuchs \cite{FU} which enjoys useful property of covering homotopy property such as exact homotopy sequence and spectral sequence and also, it is invariant under fiber homotopy equivalence.

 A fiber homotopy is a kind of homotopy which preserves points in their fibers during the homotopy (\cite{H,SP}) and weak covering homotopy property is obtained by replacing $\widetilde{F}\circ J_0=\widetilde{f}$ by $\widetilde{F}\circ J_0\simeq_p\widetilde{f}$ in the definition of covering homotopy property, (see \cite{DO, FU, P, PP}). Dold proved that under a weak local contractibility condition for a space $B$, a map $p:E\to B$ has weak covering homotopy property if and only if it is locally fiber homotopically trivial \cite[Theorem 6.4]{DO}. A map $p:E\to B$ is called h-fibration or Dold fibration if it has weak covering homotopy property with respect to every space.  A good characterization of fibrations and h-fibrations can be found in \cite{DO,PP,SP}.

A map $p:E\lo B$ is said to have unique path lifting property (upl) if for given paths $\alpha$ and $\alpha'$ in $E$ such that $p\circ \alpha=p\circ \alpha'$ and $\alpha(0)=\alpha'(0)$, we have $\alpha=\alpha'$ (see \cite{SP}). Unique path lifting property has an important role for fibrations, because make them very close to covering projections and also implies lifting theorem \cite[Theorem 2.4.5]{SP}. In \cite{MPT}, the authors presented a homotopical version of unique path lifting property and studied it's properties for fibrations.

 Here, after rehabilitating the definition of fiber homotopic maps with respect to arbitrary maps (instead of fibrations), we study h-fibrations with weakly unique path homotopically lifting property and give a sufficient condition which makes an h-fibration to be a fibration. We prove that an h-fibration has the homotopically path lifting property, which is a homotopical version of path lifting property. Also, we show that an h-fibration has homotopically lifting function. In Section 3, by proving that the composition of h-fibrations is an h-fibration, we introduce some new categories by h-fibrations $\mathrm{hFib}$, $\mathrm{hFibu}$ and $\mathrm{hFibwu}$. Then we compare them by the categories constructed by fibrations $\mathrm{Fib}$, $\mathrm{Fibu}$ and $\mathrm{Fibwu}$ (see \cite{MPT, SP}). Moreover, we show that these new categories have products and coproducts by introducing them.
\subsection{Preliminaries}
Throughout this paper, all spaces are path connected, unless otherwise stated. A \emph{map} $f:X\lo Y$ means a continuous function. A map $\al:I\lo X$ is called a path from $x_0=\al(0)$ to $x_1=\al(1)$ and it's inverse $\al^{-1}$ is defined by $\al^{-1}(t)=\al(1-t)$. For two paths $\al, \bt:I\lo X$ with $\al(1)=\bt(0)$, $\al*\bt$ denotes the usual concatenation of the two paths.\ Also, all homotopies between paths are assumed to be relative to end points.

\ For given maps $p:E\to B$ and $f:X\to B$, a map $\widetilde{f}:X\to E$ is  called a lift of $f$ if $p\circ \widetilde{f}=f$.
When $F:X\times I\longrightarrow Y$ is a map, we say that $F$ is a homotopy from $F_0$ to $F_1$ and write $F:F_0\simeq F_1$, where $F_i:X\lo Y$ is $F_i(x)=F(x,i)$, for $i=0,1$. The constant map from $X$ to $Y$ which sends all the points to $y\in Y$ is denoted by $C_y$.

For a toplological space $Y$, $Y^I$ is the space of paths in $Y$ and for a given map $f:X\to Y$, $P_f$ is the mapping path space, that is, $P_f={\{(x,\alpha)\in X\times Y^I|f(x)=\alpha(0)}\}$. Also, $p:P_f\to X$ by $p(x,\alpha)=x$ is a fibration which is called the mapping path fibration (see \cite{SP}).


\section{h-Fibrations}

For the definition of fiber homotopic maps with respect to a fibration and the definition of fiber homotopy equivalent fibrations, see \cite{SP}.\ We give here similar definitions for an arbitrary map and a few basic results that we need in sequel.
\begin{definition}
  Let $p:E\to B$ be a map.\ Two maps $f_0,f_1:X\to E$ are said to be fiber homotopic with respect to $p$, denoted by $f_0\simeq_p f_1$\ if there is a homotopy $F:f_0\simeq f_1$\ such that $p\circ F(x,t) = p\circ f_0(x)=p\circ f_1(x)$\ for every $x\in X$ and any $t\in I$.
\end{definition}
 We recall that for given maps $p_1:E_1\to B$ and $p_2:E_2\to B$, a map $f:E_1\to E_2$ is called fiber-preserving if $p_1=p_2\circ f$.
\begin{definition}
Two maps $p_1:E_1\to B$\ and $p_2:E_2\to B$ are said to be fiber homotopy equivalent, if there exist fiber preserving maps $f:E_1\to E_2$\ and $g:E_2\to E_1$ such that $g\circ f\simeq_{p_1} 1_{E_1}$\ and $f\circ g\simeq_{p_2} 1_{E_2}$.\ Each of the maps $f$ and $g$ is called a fiber homotopy equivalence.
\end{definition}
We have the following proposition for the fiber homotopy property.
 \begin{proposition}\label{P2.2}
\ Let $p:E\to B$ be a map.
\\(i) If $p':E'\to E$ and $f_0,f_1:X\to E'$ are maps such that $f_0\simeq_{p'}f_1$, then $f_0\simeq_{p\circ p'}f_1$.
\\ (ii) The fiber homotopy with respect to $p$ is an equivalence relation on the set of maps from $X$ to $E$.
\\ (iii) If $f,g:X\to E$ and $h:Z\to X$ are maps such that $f\simeq_{p}g$, then $f\circ h\simeq_{p}g\circ h$.
\\ (iv) If $f, g:X\to E'$ and $p':E'\to E$ are maps such that $f\simeq_{p\circ p'}g$, then $p'\circ f\simeq_p p'\circ g$.
\begin{proof}\
 \\ (i) Let $H:f_0\simeq_{p'}f_1$.
 Since $p'\circ H(x,t)=p'\circ f_0(x)=p'\circ f_1(x),$ we have
 $$(p\circ p')\circ H(x,t)=(p\circ p')\circ f_0(x)=(p\circ p')\circ f_1(x),$$ which implies that $H:f_0\simeq_{p\circ p'}f_1$.
\\ (ii) It is similar to the proof of ordinary homotopy relation.
\\ (iii) Let $H:X\times I\to E$ be a fiber homotopy from $f$ to $g$ with respect to $p$. Define $H':Z\times I\to E$ by $H'(z,t)=H(h(z),t)$.\ Then $H'$ is a homotopy from $f\circ h$ to $g\circ h$ and $$p\circ H'(z,t)=p\circ H(h(z),t)=p\circ f(h(z))=p\circ g(h(z)).$$
\\ (iv) Let $H:X\times I\to E'$ be the fiber homotopy $H:f\simeq_{p\circ p'}g$ and define $H':X\times I\to E$ by $H'(x,t)=p'\circ H(x,t)$. Then $p\circ H'(x,t)=p\circ p'\circ H(x,t)=p\circ p'\circ f(x)=p\circ p'\circ g(x)$ and so $H':p'\circ f\simeq_p p'\circ g$.
\end{proof}
 \end{proposition}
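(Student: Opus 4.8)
The plan is to verify the four items directly from the definition of fiber homotopy, since in each case the homotopy witnessing the conclusion is obtained from the given one by an elementary operation --- left composition, right composition, parameter reversal, or concatenation --- and the only thing to check is that the fiber condition ``$p\circ F(\cdot,t)$ is independent of $t$'' survives that operation.

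For (i) I would take a homotopy $H:f_0\simeq f_1$ with $p'\circ H(x,t)=p'\circ f_0(x)=p'\circ f_1(x)$ for all $x\in X$ and $t\in I$, and apply $p$ on the left to this chain of equalities; the very same $H$ then witnesses $f_0\simeq_{p\circ p'}f_1$. For (iii), given a fiber homotopy $H:X\times I\to E$ from $f$ to $g$ with respect to $p$, I would set $H'(z,t)=H(h(z),t)$: this is continuous as a composite, restricts to $f\circ h$ and $g\circ h$ at $t=0,1$, and satisfies $p\circ H'(z,t)=p\circ H(h(z),t)=p\circ f(h(z))$, which is the fiber condition over $Z$. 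For (iv), given a fiber homotopy $H:X\times I\to E'$ from $f$ to $g$ with respect to $p\circ p'$, I would set $H'=p'\circ H$; then $H'$ runs from $p'\circ f$ to $p'\circ g$, and $p\circ H'(x,t)=(p\circ p')\circ H(x,t)=(p\circ p')\circ f(x)=p\circ(p'\circ f)(x)$, as required.

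The only item needing slightly more care is (ii). Reflexivity uses the constant homotopy $F(x,t)=f(x)$, for which $p\circ F(x,t)=p\circ f(x)$ trivially. For symmetry, if $F:f\simeq_p g$ then $\overline{F}(x,t)=F(x,1-t)$ is a homotopy $g\simeq f$ with $p\circ\overline{F}(x,t)=p\circ F(x,1-t)=p\circ f(x)=p\circ g(x)$. For transitivity, if $F:f\simeq_p g$ and $G:g\simeq_p h$, the concatenation defined by $F(x,2t)$ for $t\le\tfrac{1}{2}$ and $G(x,2t-1)$ for $t\ge\tfrac{1}{2}$ is continuous by the pasting lemma (the pieces agree at $t=\tfrac{1}{2}$ because $F(x,1)=g(x)=G(x,0)$), and on each piece $p$ applied to it equals $p\circ f(x)=p\circ g(x)=p\circ h(x)$, so the fiber condition holds on all of $X\times I$. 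I do not expect any genuine obstacle here: the content is entirely formal, and the only points worth attention are checking the fiber condition on both halves in the transitivity step and invoking the pasting lemma there for continuity of the concatenation.
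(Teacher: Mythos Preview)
Your proposal is correct and matches the paper's proof essentially line for line: for (i), (iii), and (iv) you use the same homotopies $H$, $H'=H\circ(h\times \mathrm{id})$, and $H'=p'\circ H$ as the paper, and for (ii) you spell out the reflexivity/symmetry/transitivity argument that the paper merely alludes to with ``similar to the proof of ordinary homotopy relation.'' There is no substantive difference in approach.
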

   A map $p:E\to B$ has weak covering homotopy property, abbreviated by wchp, if for every space X and any maps $\widetilde{f}:X\to E$, $F:X\times I\to B$ with $p\circ \widetilde{f}=F\circ J_0$, there exists a homotopy $\widetilde{F}:X\times I\to E$ such taht $p\circ \widetilde{F}=F$\ and $\widetilde{F}\circ J_0\simeq_p \widetilde{f}$. In Fact, Dold's definition was a bit different \cite{DO}. A map $p:E\to B$ is called an h-fibration if it has wchp \cite{PP}. By \cite[Proposition 5.2]{DO}, if a map is fiber homotopy equivalent to a fibration, it has wchp and so is an h-fibration. Also, in \cite{P} it is mentioned that every h-fibration is fiber homotopy equivalent to a fibration (see \cite[Proposition 1.15]{PP}). Since one can not find a detailed proof for this fact, we are going to give a proof for it.
First, for $f:X\rightarrow Y$, let $P_f$ be the mapping path space, $p:P_f\to X$ be the mapping path fibration and $h:X\to P_f$ be the map $h(x)=(x,C_{f(x)})$. Then $h$ is a section of $p$, moreover, $h$ and $p$ are homotopy inverse (\cite[Theorem 2.8.9]{SP}).
\begin{theorem}
  A map is an h-fibration if and only if it is fiber homotopy equivalent to a fibration.
    \begin{proof}
      Let $f:X\to Y$ be fiber homotopy equivalent to a fibration $f':X'\to Y$.\ There exist two maps $s:X\to X'$ and $s':X'\to X$ such that $s\circ s'\simeq_{f'}1_{X'}$, $s'\circ s\simeq_{f}1_{X}$, $f\circ s'=f'$ and $f'\circ s=f$ .\ If $\widetilde{f}:Z\to X$ and $F:Z\times I\to Y$ are maps such that $f\circ \widetilde{f}=F\circ J_0$, then $f'\circ s\circ \widetilde{f}=F\circ J_0$.\ Therefore by assumption, there is a homotopy $\widetilde{F}:Z\times I\to X'$ such that $f'\circ \widetilde{F}=F$ and $\widetilde{F}\circ J_0=s\circ \widetilde{f}$.\ Let $\widetilde{G}=s'\circ \widetilde{F}$.\ Then $$f\circ \widetilde{G}=f\circ s'\circ \widetilde{F}=f'\circ \widetilde{F}=F,$$ and by Proposition \ref{P2.2} $$\widetilde{G}\circ J_0=s'\circ \widetilde{F}\circ J_0=s'\circ s\circ \widetilde{f}\simeq_{f}1_{X}\circ \widetilde{f}=\widetilde{f}.$$\ Hence $f$ is an h-fibration.
      \\ Conversely, let $f:X\to Y$ be an h-fibration. Define a map $p_1:P_f\to Y$ by $p_1(x,\alpha)=\alpha(1)$ which is a fibration (see \cite[Theorem 2.8.9]{SP}). We show that $f$ and $p_1$ are fiber homotopy equivalent.\ Let $\gamma:P_f\times I\to Y$ be defined by $\gamma(x,\alpha,t)=\alpha(t)$ .\ Note that $$f\circ p(x,\alpha)=f(x)=\alpha(0)=\gamma(x,\alpha,0)=\gamma\circ J_0(x,\alpha),$$ where $J_0:P_f\to P_f\times I$ is the map $J_0(x,\alpha)=(x,\alpha,0)$.\ Since $f$ is an h-fibration, there exist homotopies $\widetilde{\gamma}:P_f\times I\to X$ and $T:P_f\times I\to X$ such that $f\circ \widetilde{\gamma}=\gamma$ and $T:\widetilde{\gamma}\circ J_0\simeq_f p$. Let $g:P_f\to X$ be defined by $g(x,\alpha)=\widetilde{\gamma}(x,\alpha,1)$. It is sufficient to show that $g\circ h\simeq_f 1_X$ and $h\circ g\simeq_{p_1} 1_{P_f}$. We have that $p_1\circ h=f$ and also $f\circ g=p_1$, because $f\circ g(x,\alpha)=f\circ \widetilde{\gamma}(x,\alpha,1)=\gamma(x,\alpha,1)=\alpha(1)=p_1(x,\alpha)$. Let $F:X\times I\to X$ be the map $F(x,t)=\widetilde{\gamma}(x,C_{f(x)},t)$.
      Since $F:F_0\simeq F_1$ and
      $$f\circ F(x,t)=f\circ \widetilde{\gamma}(x,C_{f(x)},t)=\gamma(x,C_{f(x)},t)=C_{f(x)}(t)=f(x),$$
      $$f\circ F_0(x)=f\circ F(x,0)=f\circ \widetilde{\gamma}(x,C_{f(x)},0)=\gamma(x,C_{f(x)},0)=C_{f(x)}(0)=f(x),$$
       $$f\circ F_1(x)=f\circ F(x,1)=f\circ \widetilde{\gamma}(x,C_{f(x)},1)=\gamma(x,C_{f(x)},1)=C_{f(x)}(1)=f(x),$$ $$F_1(x)=F(x,1)=\widetilde{\gamma}(x,C_{f(x)},1)=g(x,C_{f(x)})=g\circ h(x),$$ we have $F:F_0\simeq_f g\circ h$.\\
   \ Define $T':X\times I\to X$ by $T'(x,t)=T(x,C_{f(x)},t)$.\ Then $T':F_0\simeq_f 1_X$ since
    $$T'(x,0)=T(x,C_{f(x)},0)=\widetilde{\gamma}\circ J_0(x,C_{f(x)})=F(x,0)=F_0(x),$$
    $$T'(x,1)=T(x,C_{f(x)},1)=p(x,C_{f(x)})=x=1_X(x),$$
    $$f\circ T'(x,t)=f\circ T(x,C_{f(x)},t)=f\circ \widetilde{\gamma}\circ J_0(x,C_{f(x)})=f\circ F_0(x),$$
    $$f\circ T'(x,t)=f\circ T(x,C_{f(x)},t)=f\circ p(x,C_{f(x)})=f(x)=f\circ 1_X(x).$$
      Hence, transitivity of fiber homotopy implies that $g\circ h\simeq_f 1_X$.
    \\ For the second fiber homotopy, define $H:P_f\times I\to P_f$ by $H(x,\alpha,s)=(\widetilde{\gamma}(x,\alpha,s),\alpha_s)$, for in which $\alpha_s$ is the path $\alpha_s(t)=\alpha(s+t-st)$, for every $s,t\in I$.\ Clearly, $H:H_0\simeq H_1$.
     \ Moreover,
     $$p_1\circ H(x,\alpha,s)=p_1(\widetilde{\gamma}(x,\alpha,s),\alpha_s)=\alpha_s(1)=\alpha(1),$$
     $$p_1\circ H_0(x,\alpha)=p_1\circ H(x,\alpha,0)=p_1(\widetilde{\gamma}(x,\alpha,0),\alpha_0)=\alpha_0(1)=\alpha(1),$$
     $$p_1\circ H_1(x,\alpha)=p_1\circ H(x,\alpha,1)=p_1(\widetilde{\gamma}(x,\alpha,1),\alpha_1)=p_1(\widetilde{\gamma}(x,\alpha,1),C_{\alpha(1)})=C_{\alpha(1)}(1)=\alpha(1),$$ and hence $H:H_0\simeq_{p_1} H_1.$
    \ On the other hand, $$h\circ g(x,\alpha)=h(\widetilde{\gamma}(x,\alpha,1))=(\widetilde{\gamma}(x,\alpha,1),C_{f\circ \widetilde{\gamma}(x,\alpha,1)})=$$$$(\widetilde{\gamma}(x,\alpha,1),C_{\gamma(x,\alpha,1)})=
    (\widetilde{\gamma}(x,\alpha,1),C_{\alpha(1)})=H_1(x,\alpha).$$ Thus $h\circ g=H_1$ and so $H:H_0\simeq_{p_1}h\circ g$.\ Now, define $T'':P_f\times I\to P_f$ by $T''(x,\alpha,s)=(T(x,\alpha,s),\alpha)$.\ Note that $T''$ is well-define because $$f\circ T(x,\alpha,s)=f\circ p(x,\alpha)=f(x)=\alpha(0).$$
    \ Moreover, $T'':H_0\simeq_{p_1} 1_{P_f}$ since
     $$T''(x,\alpha,0)=(T(x,\alpha,0),\alpha)=(\widetilde{\gamma}(x,\alpha,0),\alpha)=H_0(x,\alpha),$$ $$T''(x,\alpha,1)=(T(x,\alpha,1),\alpha)=(p(x,\alpha),\alpha)=(x,\alpha)=1_{P_f}(x,\alpha),$$
     $$p_1\circ T''(x,\alpha,s)=p_1(T(x,\alpha,s),\alpha)=\alpha(1),$$
     $$p_1\circ H_0(x,\alpha)=p_1\circ H(x,\alpha,0)=p_1(\widetilde{\gamma}(x,\alpha,0),\alpha)=\alpha(1),$$
     $$p_1\circ 1_{P_f}(x,\alpha)=p_1(x,\alpha)=\alpha(1).$$
         Therefore, using $H$ and $T''$, we have
     $h\circ g\simeq_{p_1}1_{P_f}$.
     \end{proof}
\end{theorem}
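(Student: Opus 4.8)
The plan is to prove both implications directly from the definition of weak covering homotopy property (wchp), using the mapping path space $P_f$ and its two canonical fibrations as the bridge, and to exhibit all the relevant homotopies explicitly.

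For the easy direction — a map $f:X\to Y$ fiber homotopy equivalent to a fibration is an h-fibration — I would start from a fibration $f':X'\to Y$ together with fiber-preserving homotopy inverses $s:X\to X'$ and $s':X'\to X$ satisfying $f'\circ s=f$, $f\circ s'=f'$, $s'\circ s\simeq_f 1_X$. Given a wchp problem $(\widetilde f:Z\to X,\ F:Z\times I\to Y)$ with $f\circ\widetilde f=F\circ J_0$, I would transport it to $X'$ via $s$: since $f'\circ s\circ\widetilde f=F\circ J_0$, the pair $(s\circ\widetilde f,\ F)$ is a genuine covering-homotopy problem for the Hurewicz fibration $f'$, hence admits a strict solution $\widetilde F$ with $f'\circ\widetilde F=F$, $\widetilde F\circ J_0=s\circ\widetilde f$. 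Pulling this back by $s'$ and using $f\circ s'=f'$ gives $\widetilde G=s'\circ\widetilde F$ with $f\circ\widetilde G=F$ on the nose, while $\widetilde G\circ J_0=s'\circ s\circ\widetilde f\simeq_f\widetilde f$ by Proposition~\ref{P2.2}(iii). That is exactly wchp. The only point requiring care here is to note that the fiber homotopy equivalence data can be arranged so that $f'\circ s=f$ and $f\circ s'=f'$ hold strictly, which is part of the definition of fiber homotopy equivalence over $Y$.

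For the hard direction — an h-fibration $f:X\to Y$ is fiber homotopy equivalent to a fibration — I would take the fibration to be $p_1:P_f\to Y$, $p_1(x,\alpha)=\alpha(1)$, and build an explicit fiber homotopy equivalence over $Y$ between $f$ and $p_1$. In one direction use the standard section $h:X\to P_f$, $h(x)=(x,C_{f(x)})$, which satisfies $p_1\circ h=f$. To produce the reverse map, feed the evaluation homotopy $\gamma:P_f\times I\to Y$, $\gamma(x,\alpha,t)=\alpha(t)$, into wchp: since $\gamma\circ J_0=f\circ p$ with $p:P_f\to X$ the projection, wchp yields $\widetilde\gamma:P_f\times I\to X$ with $f\circ\widetilde\gamma=\gamma$ and a fiber homotopy $T:\widetilde\gamma\circ J_0\simeq_f p$; set $g(x,\alpha)=\widetilde\gamma(x,\alpha,1)$, so that $f\circ g(x,\alpha)=\gamma(x,\alpha,1)=\alpha(1)=p_1(x,\alpha)$, i.e. $g$ is fiber-preserving over $Y$.

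It then remains to check $g\circ h\simeq_f 1_X$ and $h\circ g\simeq_{p_1}1_{P_f}$, and this is where the real bookkeeping lies. For the first, I would concatenate the homotopy $s\mapsto\widetilde\gamma(x,C_{f(x)},s)$, which runs from $(\widetilde\gamma\circ J_0)\circ h$ to $g\circ h$, with $s\mapsto T(x,C_{f(x)},s)$, which runs from the same point to $1_X$; the identities $f\circ\widetilde\gamma=\gamma$ and $f\circ T=f\circ p$ force both to be fiber homotopies over $f$, and transitivity (Proposition~\ref{P2.2}(ii)) concludes. For the second, the key device is $H:P_f\times I\to P_f$, $H(x,\alpha,s)=(\widetilde\gamma(x,\alpha,s),\alpha_s)$ with $\alpha_s(t)=\alpha(s+t-st)$ the reparametrized tail of $\alpha$: well-definedness uses $f\circ\widetilde\gamma(x,\alpha,s)=\alpha(s)=\alpha_s(0)$, the endpoint $s=1$ gives $h\circ g$ since $\alpha_1=C_{\alpha(1)}$, and $p_1\circ H(x,\alpha,s)=\alpha_s(1)=\alpha(1)$ shows it is a fiber homotopy over $p_1$; splicing with $T''(x,\alpha,s)=(T(x,\alpha,s),\alpha)$ (well-defined since $f\circ T(x,\alpha,s)=f(x)=\alpha(0)$) reaches $1_{P_f}$. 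I expect the main obstacle to be precisely this verification work: confirming that each ad hoc homotopy lands in $P_f$ and is genuinely fiberwise, i.e. computing $f\circ(-)$ or $p_1\circ(-)$ and observing independence of the homotopy parameter — the conceptual content is entirely in the choice of $p_1$, $g$, and the tail homotopy $H$.
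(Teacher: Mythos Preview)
Your proposal is correct and essentially identical to the paper's own proof: the same transport-via-$s,s'$ argument for the easy direction, and for the converse the same mapping path fibration $p_1:P_f\to Y$, the same maps $h(x)=(x,C_{f(x)})$ and $g(x,\alpha)=\widetilde\gamma(x,\alpha,1)$ obtained from wchp applied to the evaluation homotopy, and even the same explicit fiber homotopies $H(x,\alpha,s)=(\widetilde\gamma(x,\alpha,s),\alpha_s)$ with $\alpha_s(t)=\alpha(s+t-st)$ and $T''(x,\alpha,s)=(T(x,\alpha,s),\alpha)$. The verifications you sketch match the paper's line by line.
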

For fibrations with path connected base space, any two fibers have the same homotopy type.\ Since, every h-fibration is fiber homotopy equivalent to a fibration, and a fiber homotopy equivalence can be thought as a family of homotopy
equivalences between corresponding fibers (\cite[Page 406]{H}), hence we have another proof for the following proposition.
\begin{proposition} \label{P2.5} (\cite[Proposition 1.12]{PP}).
  The fibers of an h-fibration have the same homotopy type.
\end{proposition}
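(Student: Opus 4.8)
The plan is to reduce the statement to the classical fact for Hurewicz fibrations by invoking the previous Theorem. Let $p:E\to B$ be an h-fibration. By the Theorem, there exist a fibration $q:E'\to B$ and fiber preserving maps $f:E\to E'$ and $g:E'\to E$ with $q\circ f=p$, $p\circ g=q$, $g\circ f\simeq_p 1_E$ and $f\circ g\simeq_q 1_{E'}$. The whole argument will be run through these data.

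First I would fix $b\in B$ and prove that $p^{-1}(b)$ and $q^{-1}(b)$ are homotopy equivalent. Since $f$ and $g$ are fiber preserving, $f(p^{-1}(b))\subseteq q^{-1}(b)$ and $g(q^{-1}(b))\subseteq p^{-1}(b)$, so restriction yields maps $f_b:p^{-1}(b)\to q^{-1}(b)$ and $g_b:q^{-1}(b)\to p^{-1}(b)$. A fiber homotopy with respect to $p$ keeps each point inside its own fiber for all $t\in I$, so restricting the fiber homotopy realizing $g\circ f\simeq_p 1_E$ to $p^{-1}(b)\times I$ gives an ordinary homotopy $g_b\circ f_b\simeq 1_{p^{-1}(b)}$ inside $p^{-1}(b)$; likewise restricting $f\circ g\simeq_q 1_{E'}$ to $q^{-1}(b)\times I$ gives $f_b\circ g_b\simeq 1_{q^{-1}(b)}$. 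Hence $f_b$ is a homotopy equivalence and $p^{-1}(b)\simeq q^{-1}(b)$.

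Next I would use that $B$ is path connected and $q$ is a fibration: by the classical result for fibrations over a path connected base, any two fibers of $q$ have the same homotopy type, so $q^{-1}(b)\simeq q^{-1}(b')$ for all $b,b'\in B$. Concatenating the homotopy equivalences, $p^{-1}(b)\simeq q^{-1}(b)\simeq q^{-1}(b')\simeq p^{-1}(b')$, which is exactly the assertion of the proposition.

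The only point that needs genuine care — and hence the main obstacle — is the verification that a fiber homotopy equivalence restricts to an honest homotopy equivalence on each fiber: one must check both that the restricted maps land in the correct fiber, which is immediate from the fiber preserving hypothesis, and that the restricted fiber homotopies are genuine homotopies taking place inside a single fiber, which is immediate from the definition of $\simeq_p$ since such a homotopy never leaves the fiber over the fixed base point. Everything else is a direct appeal to the previous Theorem together with the classical statement for fibrations.
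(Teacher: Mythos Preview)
Your proof is correct and follows essentially the same approach as the paper: the paper also deduces the result by combining the previous Theorem (every h-fibration is fiber homotopy equivalent to a fibration) with the observation that a fiber homotopy equivalence restricts to a homotopy equivalence on each fiber, and then invokes the classical fact that fibers of a fibration over a path connected base share the same homotopy type. Your write-up merely supplies the details the paper leaves implicit.
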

\begin{corollary}
  If an h-fibration has the path connected base space with a path connected fiber, then its total space is also path connected.
  \begin{proof}
    Let $p:E\to B$ be an h-fibration with a path connected fiber. Then by Proposition \ref{P2.5}, every fiber of $p$ is path connected.
     Let $p':E'\to B$ be a fibration which is fiber homotopy equivalent to $p$. Then the fibers of $p'$ are path connected and so by
     \cite[Exercise 2.8.E.2]{SP}, $E'$ is path connected. By definition, there exist fiber preserving maps $f:E\to E'$ and $g:E'\to E$. If $x,y\in E$, then there exists a path $\alpha$ in $E'$ from $f(x)$ to $f(y)$. Since $g\circ f\simeq_p 1_E$, we have $x, g(f(x))\in p^{-1}(x)$, also $y, g(f(y))\in p^{-1}(y)$. Let $\beta$ be a path in $p^{-1}(x)$ from $x$ to $g(f(x))$ and $\gamma$ be a path in $p^{-1}(y)$ from $g(f(y))$ to $y$.\ Therefore $\beta\ast (g\circ \alpha)\ast \gamma$ is a path in $E$ from $x$ to $y$.
  \end{proof}
  \end{corollary}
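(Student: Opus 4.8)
\emph{Proof proposal.} The plan is to reduce the claim to the familiar fact for genuine fibrations — a fibration with path connected base and path connected fibers has path connected total space — and then transport path connectedness back along the fiber homotopy equivalence supplied by the Theorem above. The point of detouring through a fibration is that an h-fibration need not admit genuine path lifting, so one cannot simply lift a path in the base directly.

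First I would record, via Proposition \ref{P2.5}, that since one fiber of the h-fibration $p:E\to B$ is path connected and all fibers of $p$ have the same homotopy type, \emph{every} fiber of $p$ is path connected. Then, using the Theorem above, I would fix a fibration $p':E'\to B$ that is fiber homotopy equivalent to $p$, together with fiber preserving maps $f:E\to E'$ and $g:E'\to E$ satisfying $g\circ f\simeq_p 1_E$ and $f\circ g\simeq_{p'}1_{E'}$. A fiber preserving map restricts over each $b\in B$ to a map $p^{-1}(b)\to (p')^{-1}(b)$, and the two fiber homotopies restrict to homotopies within these fibers; hence $f$ and $g$ restrict to mutually inverse homotopy equivalences $p^{-1}(b)\simeq (p')^{-1}(b)$, so every fiber of $p'$ is path connected. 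Since $B$ is path connected, \cite[Exercise 2.8.E.2]{SP} then yields that $E'$ is path connected.

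It remains to upgrade this to path connectedness of $E$. Given $x,y\in E$, I would choose a path $\alpha$ in $E'$ from $f(x)$ to $f(y)$ and form $g\circ\alpha$, a path in $E$ from $g(f(x))$ to $g(f(y))$. A fiber homotopy $H:g\circ f\simeq_p 1_E$ yields the path $t\mapsto H(x,t)$ from $g(f(x))$ to $x$; since $p\circ H(x,t)=p(x)$ for all $t$, this path lies in the fiber $p^{-1}(p(x))$, and likewise $t\mapsto H(y,t)$ gives a path from $g(f(y))$ to $y$ inside $p^{-1}(p(y))$ (alternatively one may invoke path connectedness of those fibers directly). Concatenating the reverse of the first path, then $g\circ\alpha$, then the second path produces a path in $E$ from $x$ to $y$; hence $E$ is path connected.

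Once the Theorem is in hand the argument is mostly bookkeeping, and I expect the only step requiring a little care to be the assertion that a fiber homotopy equivalence restricts to genuine homotopy equivalences of the fibers over each point — that is, verifying that fiber preserving maps and fiber homotopies behave well under restriction to $p^{-1}(b)$. This is also the step where path connectedness of the given fiber of $p$ actually enters, through Proposition \ref{P2.5}.
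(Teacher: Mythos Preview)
Your proposal is correct and follows essentially the same route as the paper: pass to a fiber homotopy equivalent fibration $p'$, use Proposition~\ref{P2.5} and \cite[Exercise 2.8.E.2]{SP} to get $E'$ path connected, then transport a path back via $g$ and close the gaps at the endpoints inside the fibers. The only cosmetic difference is that you produce the fiber paths explicitly from the fiber homotopy $H$, whereas the paper simply invokes path connectedness of the fibers to obtain $\beta$ and $\gamma$; both work.
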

 By definitions, every fibration is an h-fibration.\ But an h-fibration is not necessarily a fibration (for an example see \cite{DO}).\ In order to find a sufficient condition which makes an h-fibration a fibration, first consider the following lemma.
\begin{lemma}\label{L2.5}
  If a map $p:E\to B$ has upl and $f_0,f_1:X\to E$ are fiber homotopic with respect to $p$, then $f_0=f_1$.
  \begin{proof}
    Let $H:f_0\simeq_p f_1$.\ Then for every $x\in X$ and every $t\in I$, $p\circ H(x,t)=p\circ f_0(x)=p\circ f_1(x)$.\ For a fix $x\in X$, $H(x,-)$ is a path in the fiber $p^{-1}(p\circ f_0(x))$ and so $p\circ H(x,-)=C_{p\circ f_0(x)}$.\ Since $p\circ C_{f_0(x)}=C_{p\circ f_0(x)}$,
    $H(x,0)=f_0(x)=C_{f_0(x)}(0)$ and $p$ has upl, we have $H(x,-)=C_{f_0(x)}$.\ Hence $f_0(x)=H(x,0)=H(x,1)=f_1(x)$, as desired.
    \end{proof}
\end{lemma}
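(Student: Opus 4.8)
The plan is to argue pointwise: to conclude $f_0=f_1$ it is enough to fix an arbitrary $x\in X$ and show $f_0(x)=f_1(x)$. For this I will use the given homotopy between $f_0$ and $f_1$ to produce a path in $E$ joining these two points, observe that this path lies in a single fiber, and then invoke the unique path lifting property to force it to be constant.

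Concretely, let $H:f_0\simeq_p f_1$ be the given fiber homotopy with respect to $p$ and fix $x\in X$. Consider the path $\gamma:I\to E$ defined by $\gamma(t)=H(x,t)$; it is continuous since $H$ is, and $\gamma(0)=f_0(x)$, $\gamma(1)=f_1(x)$. The defining property of a fiber homotopy with respect to $p$ gives $p\circ H(x,t)=p\circ f_0(x)$ for every $t\in I$, i.e.\ $p\circ\gamma=C_{p\circ f_0(x)}$, the constant path at the point $p(f_0(x))\in B$. Thus $\gamma$ is a path lying entirely in the fiber $p^{-1}(p(f_0(x)))$.

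Now I would compare $\gamma$ with the constant path $C_{f_0(x)}:I\to E$ at $f_0(x)$. We have $p\circ C_{f_0(x)}=C_{p\circ f_0(x)}=p\circ\gamma$ and $C_{f_0(x)}(0)=f_0(x)=\gamma(0)$, so $\gamma$ and $C_{f_0(x)}$ are two paths in $E$ with the same image under $p$ and the same initial point; since $p$ has upl they must coincide, whence $f_1(x)=\gamma(1)=C_{f_0(x)}(1)=f_0(x)$. As $x\in X$ was arbitrary, $f_0=f_1$. I do not anticipate a real obstacle here; the only point requiring care is the translation of the fiber-homotopy hypothesis into the assertion that each slice $H(x,-)$ stays inside one fiber and therefore projects to a constant path, since it is precisely this that brings the upl hypothesis into play. (This lemma is the intended engine behind the promised criterion that an h-fibration with unique path lifting is a fibration: given wchp one obtains a lift $\widetilde F$ with $\widetilde F\circ J_0\simeq_p\widetilde f$, and the rigidity proved here upgrades this to the equality $\widetilde F\circ J_0=\widetilde f$.)
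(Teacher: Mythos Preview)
Your proof is correct and follows essentially the same approach as the paper's: fix $x\in X$, slice the fiber homotopy to get a path $H(x,-)$ (your $\gamma$) in a single fiber, compare it with the constant path $C_{f_0(x)}$ via upl, and conclude $f_0(x)=f_1(x)$. The arguments are virtually identical up to notation.
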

\begin{theorem}\label{T2.10}
  Every h-fibration with upl is a fibration.
\begin{proof}
Let $p:E\to B$ be an h-fibration. Also let $X$ be a topological space, $\widetilde{f}:X\to E$ and $F:X\times I\to B$ be maps such that $p\circ \widetilde{f}=F\circ J_0$.\ Then, there exists a homotopy $\widetilde{F}:X\times I\to E$ such that $p\circ \widetilde{F}=F$\ and $\widetilde{F}\circ J_0\simeq_p \widetilde{f}$.\ By Lemma \ref{L2.5}, $\widetilde{F}\circ J_0=\widetilde{f}$ which implies that $p$ is a fibration.
\end{proof}
\end{theorem}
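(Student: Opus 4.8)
The plan is to verify the (strong) covering homotopy property for $p$ directly, by feeding the given lifting data through the weak covering homotopy property and then invoking unique path lifting — in the packaged form of Lemma \ref{L2.5} — to promote the resulting fiber homotopy to an honest equality.

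Concretely, I would begin with arbitrary lifting data: a space $X$, a map $\widetilde{f}:X\to E$, and a homotopy $F:X\times I\to B$ with $p\circ\widetilde{f}=F\circ J_0$. This is exactly what one must lift to establish chp. Since $p$ is an h-fibration it has wchp, so there is a homotopy $\widetilde{F}:X\times I\to E$ with $p\circ\widetilde{F}=F$ and $\widetilde{F}\circ J_0\simeq_p\widetilde{f}$. The only discrepancy between this and the definition of a fibration is that wchp delivers a \emph{fiber} homotopy $\widetilde{F}\circ J_0\simeq_p\widetilde{f}$ rather than the equality $\widetilde{F}\circ J_0=\widetilde{f}$ that chp demands.

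The decisive step closes that gap using the hypothesis that $p$ has upl: by Lemma \ref{L2.5}, two maps into $E$ that are fiber homotopic with respect to a map with unique path lifting must coincide, so applying it to $\widetilde{F}\circ J_0$ and $\widetilde{f}$ yields $\widetilde{F}\circ J_0=\widetilde{f}$. Thus the very homotopy $\widetilde{F}$ produced by wchp is a lift of $F$ satisfying $\widetilde{F}\circ J_0=\widetilde{f}$, which is precisely the covering homotopy property; since $X$, $\widetilde{f}$, $F$ were arbitrary, $p$ is a fibration.

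I expect no genuine obstacle in this argument once Lemma \ref{L2.5} is available — it is essentially an assembly of definitions. If I had to locate where the real work sits, it is in Lemma \ref{L2.5} itself: a fiber homotopy $H$ restricted to a fixed point $x$ is a path in the fiber over $p\circ f_0(x)$, hence $p\circ H(x,-)$ is constant; since $H(x,0)=f_0(x)=C_{f_0(x)}(0)$ and $p$ has upl, $H(x,-)=C_{f_0(x)}$, forcing $f_0(x)=H(x,1)=f_1(x)$. Everything in the theorem then follows formally from this and wchp.
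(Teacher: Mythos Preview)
Your argument is correct and matches the paper's proof essentially line for line: feed the lifting data through wchp to obtain $\widetilde{F}$ with $\widetilde{F}\circ J_0\simeq_p\widetilde{f}$, then apply Lemma~\ref{L2.5} to upgrade this to $\widetilde{F}\circ J_0=\widetilde{f}$. Your closing paragraph also reproduces the paper's proof of Lemma~\ref{L2.5}, so there is nothing to add.
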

Unique path lifting property is important in the study of fibrations because make them a covering map, when the base space is locally nice, i.e, locally path connected and semi-locally simply connected \cite{SP}. The authors introduced a homotopical version of upl in \cite{MPT} and studied its role in fibrations. Since here we are working with a weak homotopical version of fibrations, we are going to study h-fibrations with the homotopical version of upl.

A map $p:E\to B$ is said to have \emph{weakly unique path homotopically lifting property} abbreviated by wuphl, if by given two paths $\alpha$ and $\beta$ in $E$ with $\alpha(0)=\beta(0),\ \alpha(1)=\beta(1)$ and $p\circ\alpha \simeq  p\circ\beta,\ \mathrm{rel}\ \dot{I}$, then it follows that $\alpha \simeq \beta,\ \mathrm{rel}\ \dot{I}$ (see \cite{MPT}).
\\ The unique path lifting property for fibrations is equivalent to the fact that every path in any fiber is constant \cite[Theorem 2.2.5]{SP}.\ Also, the weakly unique path homotopically lifting property for fibrations is equivalent to the fact that every loop in any fiber is nullhomotopic \cite[Theorem 3.4]{MPT}.\ In the following, we show that these facts hold for h-fibrations.
\begin{proposition}\label{P2.14}
   An h-fibration $p:E\to B$ has upl if and only if every path in any fiber is constant.
   \begin{proof}
If $p$ has upl, then it is easy to see that every path in any fiber is constant. For the converse, let $\alpha, \beta:I\to E$ be two lifts of a path in $B$ started from the same point $\alpha(0)=\beta(0)$. Let $t\in I$ and consider the path $\gamma_t$ in $E$ from $\alpha(t)$ to $\beta(t)$ by
\begin{align*}
\gamma_t(t')=
\begin{cases}
  \alpha((1-2t')t), & t'\in [0,\frac 1 2] \\
  \beta((2t'-1)t), & t'\in [\frac 1 2, 1].
\end{cases}
\end{align*}
By assumption $p\circ \alpha=p\circ \beta$, then there exists a homotopy $F:p\circ \gamma_t\simeq C_{p\circ \alpha(t)}$, rel\ $\dot I$. Since $p$ has wchp, there exist homotopies $\wt F,H:I\times I\to E$ with $p\circ \wt F=F$ and $H:\wt F\circ J_0\simeq_p \gamma_t$. Thus
  $$p\circ \wt F(0,t)=F(0,t)=C_{p\circ \alpha(t)}(0)=p\circ \alpha(t),$$ $$p\circ \wt F(t,1)=F(t,1)=C_{p\circ \alpha(t)}(t)=p\circ \alpha(t),$$ and $$p\circ \wt F(1,t)=F(1,t)=C_{p\circ \alpha(t)}(1)=p\circ \alpha(t).$$ Therefore, $\wt F(0,-)\ast \wt F(-,1)\ast (\wt F(1,-))^{-1}$ is a path in the fiber $p^{-1}(p\circ \alpha(t))$. So by assumption it is constant, which implies that $\wt F(0,0)=\wt F(1,0)$. Now, note that $H(0,-)$ is a path from $\wt F(0,0)$ to $\gamma_t(0)=\alpha(t)$ in the fiber $p^{-1}(p\circ \gamma_t(0))$, and also $H(1,-)$ is a path from $\wt F(1,0)$ to $\gamma_t(1)=\beta(t)$ in the fiber $p^{-1}(p\circ \gamma_t(1))$. Then, since $\wt F(0,0)=\wt F(1,0)$ and $$p^{-1}(p\circ \gamma_t(0))=p^{-1}(p\circ \alpha(t))=p^{-1}(p\circ \beta(t))=p^{-1}(p\circ \gamma_t(1)),$$ there exists a path in this fiber from $\alpha(t)$ to $\beta(t)$, which by assumption it must be constant. Then, $\alpha(t)=\beta(t)$ and since $t$ is arbitrary we will have $\alpha=\beta$.
\end{proof}
\end{proposition}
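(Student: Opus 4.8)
The statement to prove is Proposition~\ref{P2.14}: an h-fibration $p:E\to B$ has upl if and only if every path in any fiber is constant. The plan is to establish the two directions separately, with essentially all the work going into the converse.

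\textbf{The forward direction.} Suppose $p$ has upl and let $\delta:I\to E$ be a path lying entirely in a single fiber $p^{-1}(b)$. Then $p\circ\delta = C_b = p\circ C_{\delta(0)}$, and $\delta(0) = C_{\delta(0)}(0)$, so by the defining property of upl (applied to the pair $\delta$, $C_{\delta(0)}$) we conclude $\delta = C_{\delta(0)}$. This is immediate and needs only the definition.

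\textbf{The converse — setup.} Assume every path in any fiber of $p$ is constant, and let $\al,\bt:I\to E$ be two lifts of a common path in $B$ with $\al(0)=\bt(0)$; we must show $\al=\bt$. The natural strategy is to fix $t\in I$ and prove $\al(t)=\bt(t)$, since $t$ is arbitrary. First I would form an explicit path $\gamma_t$ in $E$ running from $\al(t)$ back along $\al$ to the common basepoint and then forward along $\bt$ to $\bt(t)$; concretely $\gamma_t(t') = \al((1-2t')t)$ on $[0,\tfrac12]$ and $\gamma_t(t') = \bt((2t'-1)t)$ on $[\tfrac12,1]$. Because $p\circ\al = p\circ\bt$, the composite $p\circ\gamma_t$ is a loop at $p\circ\al(t)$ of the form (back-track then forward-track) along the \emph{same} base path, hence it is null-homotopic rel $\dot I$; pick a homotopy $F:p\circ\gamma_t\simeq C_{p\circ\al(t)}$ rel $\dot I$.

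\textbf{The converse — using wchp.} Now apply the weak covering homotopy property of $p$ to $F$ (with the map $\gamma_t:I\to E$ as the initial lift over one end), obtaining $\wt F,H:I\times I\to E$ with $p\circ\wt F = F$ and $H:\wt F\circ J_0 \simeq_p \gamma_t$. The key observation is that because $F$ is constant on the two vertical sides and on the top (it equals $C_{p\circ\al(t)}$ there), the image under $\wt F$ of the boundary path $\wt F(0,-)\ast\wt F(-,1)\ast(\wt F(1,-))^{-1}$ lies entirely in the single fiber $p^{-1}(p\circ\al(t))$; by hypothesis this path is constant, which forces $\wt F(0,0) = \wt F(1,0)$. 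Separately, $H(0,-)$ is a path in the fiber $p^{-1}(p\circ\al(t))$ joining $\wt F(0,0)$ to $\gamma_t(0)=\al(t)$, and $H(1,-)$ is a path in the fiber $p^{-1}(p\circ\bt(t))$ joining $\wt F(1,0)$ to $\gamma_t(1)=\bt(t)$; since $p\circ\al(t)=p\circ\bt(t)$ these are the same fiber, and concatenating (using $\wt F(0,0)=\wt F(1,0)$) gives a path in that fiber from $\al(t)$ to $\bt(t)$. By hypothesis it is constant, so $\al(t)=\bt(t)$. Letting $t$ vary gives $\al=\bt$.

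\textbf{Main obstacle.} The delicate point is the bookkeeping in the wchp step: one must be careful that $F$ is genuinely constant along the correct three sides of the square so that the $\wt F$-boundary path really lands in one fiber, and that the ``whisker'' homotopies $H(0,-)$ and $H(1,-)$ land in the fibers over $\gamma_t(0)$ and $\gamma_t(1)$ rather than somewhere uncontrolled — this is exactly where $H:\wt F\circ J_0\simeq_p\gamma_t$ being a \emph{fiber} homotopy is used. Everything else is routine: the forward direction, the construction of $\gamma_t$, and the reduction ``constant on each fiber $\Rightarrow$ equality of endpoints.''
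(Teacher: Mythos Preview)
Your proposal is correct and follows essentially the same route as the paper's proof: the same path $\gamma_t$, the same application of wchp to a null-homotopy $F$ of $p\circ\gamma_t$, the same three-sided boundary path of $\wt F$ landing in a single fiber to force $\wt F(0,0)=\wt F(1,0)$, and the same use of the fiber-homotopy whiskers $H(0,-)$, $H(1,-)$ to connect $\al(t)$ and $\bt(t)$ inside one fiber. Your identification of the ``main obstacle'' (that $H$ must be a \emph{fiber} homotopy so the whiskers stay in the right fibers) is exactly the point where the argument is nontrivial.
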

\begin{proposition}\label{P2.6}\
  An h-fibration $p:E\to B$ has wuphl if and only if every loop in any fiber is nullhomotopic.
\end{proposition}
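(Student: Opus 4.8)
The plan is to mirror the proof of Proposition \ref{P2.14}, pushed up one dimension. The ``only if'' direction is immediate: if $p$ has wuphl and $\gamma$ is a loop based at $e_0$ lying in a fiber $p^{-1}(b)$, then $p\circ\gamma=C_b=p\circ C_{e_0}$, so $p\circ\gamma\simeq p\circ C_{e_0}\ \mathrm{rel}\ \dot{I}$ trivially; since $\gamma$ and $C_{e_0}$ share both endpoints, wuphl forces $\gamma\simeq C_{e_0}\ \mathrm{rel}\ \dot{I}$, i.e.\ $\gamma$ is nullhomotopic.

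For the ``if'' direction I would first reduce to loops. Given paths $\alpha,\beta:I\to E$ with $\alpha(0)=\beta(0)=:e_0$, $\alpha(1)=\beta(1)$ and a homotopy $p\circ\alpha\simeq p\circ\beta\ \mathrm{rel}\ \dot{I}$, set $\delta=\alpha\ast\beta^{-1}$, a loop at $e_0$. Then $\alpha\simeq\beta\ \mathrm{rel}\ \dot{I}$ is equivalent to $\delta$ being nullhomotopic, and $p\circ\delta=(p\circ\alpha)\ast(p\circ\beta)^{-1}$ is nullhomotopic $\mathrm{rel}\ \dot{I}$ by the hypothesis on $\alpha,\beta$; so it suffices to prove that every loop $\delta$ in $E$ whose image $p\circ\delta$ is nullhomotopic $\mathrm{rel}\ \dot{I}$ is itself nullhomotopic. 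Fix such a $\delta$ based at $e_0$, together with $G:I\times I\to B$ with $G(-,0)=p\circ\delta$ and $G(-,1)=G(0,-)=G(1,-)=C_{p(e_0)}$.

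Now I would apply the wchp of $p$ with $X=I$, $\widetilde{f}=\delta$, $F=G$ (note $p\circ\delta=G\circ J_0$), obtaining a lift $\widetilde{G}:I\times I\to E$ with $p\circ\widetilde{G}=G$ and a fiber homotopy $H:\widetilde{G}\circ J_0\simeq_p\delta$. The three edges $\widetilde{G}(1,-),\widetilde{G}(-,1),\widetilde{G}(0,-)$ are lifts of constant paths at $p(e_0)$, so they all lie in the fiber $p^{-1}(p(e_0))$, and hence $\sigma:=\widetilde{G}(1,-)\ast\widetilde{G}(-,1)^{-1}\ast\widetilde{G}(0,-)^{-1}$ is a path in that fiber from $\widetilde{G}(1,0)$ to $\widetilde{G}(0,0)$. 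Since $I\times I$ is simply connected, the boundary loop of $\widetilde{G}$ is nullhomotopic in $E$, which reads $\widetilde{G}\circ J_0\simeq\sigma^{-1}\ \mathrm{rel}\ \dot{I}$. On the other hand, $H$ need not be constant on the ends of the interval, but its endpoint traces $H(0,-)$ and $H(1,-)$ are paths in $p^{-1}(p(e_0))$ (running from $\widetilde{G}(0,0)$, resp.\ $\widetilde{G}(1,0)$, to $e_0$), and the usual identity for a free homotopy of paths gives $\delta\simeq H(0,-)^{-1}\ast(\widetilde{G}\circ J_0)\ast H(1,-)\ \mathrm{rel}\ \dot{I}$. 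Combining these two relations, $\delta\simeq H(0,-)^{-1}\ast\sigma^{-1}\ast H(1,-)\ \mathrm{rel}\ \dot{I}$, and the right-hand side is a loop at $e_0$ lying entirely in $p^{-1}(p(e_0))$; by hypothesis it is nullhomotopic, hence so is $\delta$.

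The substantive step is the last one: reassembling the lifted square $\widetilde{G}$ together with the fiber homotopy $H$ (which is exactly what makes wchp weaker than the covering homotopy property) into a single loop inside one fiber, so that the hypothesis can be invoked; this is the analogue of the corresponding step in Proposition \ref{P2.14}. Everything else — the reduction $\alpha\simeq\beta\leftrightarrow\delta$ nullhomotopic, the decomposition of the boundary loop of $\widetilde{G}$, and the trace identity for free homotopies of paths — is routine. (One could instead try to deduce the ``if'' direction from \cite[Theorem 3.4]{MPT} via the theorem that every h-fibration is fiber homotopy equivalent to a fibration, but verifying that wuphl is preserved under a fiber homotopy equivalence again requires comparing endpoint traces inside fibers and so uses the very same hypothesis, so that route is no shorter.)
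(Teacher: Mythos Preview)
Your argument is correct and follows essentially the same route as the paper's proof: reduce to a loop $\gamma$ (your $\delta$) with $p\circ\gamma$ nullhomotopic, apply wchp to lift the nullhomotopy, and then combine the three ``constant'' edges of the lifted square with the two endpoint traces of the fiber homotopy $H$ into a single loop lying in one fiber, which the hypothesis kills. The paper packages the same five pieces into its path $\delta=f'\ast f\ast g\ast h^{-1}\ast h'^{-1}$ and concludes via $\gamma\ast\delta\simeq C_{\tilde{x}_0}$, whereas you phrase the identical computation as $\delta\simeq H(0,-)^{-1}\ast\sigma^{-1}\ast H(1,-)$; these are the same boundary-of-the-square identities read in slightly different orders.
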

\begin{proof}
   Necessity is trivial.\ For the sufficiency, let $\wt{\alpha},\wt{\beta}:I\lo E$ be two paths with $\wt{\alpha}(0)=\wt{\beta}(0)$, $\wt{\alpha}(1)=\wt{\beta}(1)$ and $p\circ\wt{\al}\simeq p\circ\wt{\bt}$,\ rel\ $\dot{I}$.\ Let $\gamma:=\wt{\alpha}*\wt{\beta}^{-1}$ which is a loop at $\wt{\alpha}(0)$.\ Put $\tilde{x}_0=\wt{\al}(0)$ and $x_0=p(\tilde{x}_0)$, then we have $$p\circ\gamma=p\circ(\wt{\alpha}*\wt{\beta}^{-1})=(p\circ\wt{\alpha})*(p\circ\wt{\beta}^{-1})=(p\circ\wt{\alpha})*(p\circ\wt{\beta})^{-1}\simeq C_{x_0},\ \mathrm{rel}\ \dot{I}.$$
Let $F:p\circ\gamma\simeq C_{x_0}$,\ rel\ $\dot{I}$.\ Since $p$ is an h-fibration, there exist homotopies $\wt{F}, H:I\times I\lo E$ such that $p\circ\wt{F}=F$ and $H:\gamma\simeq_p\widetilde{F}\circ J_0$.\ Let $f:=\wt{F}(0,-),\ f':=H(0,-),\ g:=\wt{F}(- ,1),\ h:=\wt{F}(1,- )$ and $h':=H(1,-)$ which are paths in $E$ with $f'(1)=f(0),\ f(1)=g(0)$,\ $g(1)=h(1)=h^{-1}(0)$ and $h^{-1}(1)=h'^{-1}(0)$, so we can define $\eta:=f*g*h^{-1}$ and $\delta:=f'\ast \eta\ast h'^{-1}$.\ Note that $\delta$ is a closed path because
  $$\delta(0)=f'(0)=H(0,0)=\gamma(0)=\wt{\alpha}(0)=\wt{\beta}(0)=$$
   $$\wt{\beta}^{-1}(1)=\gamma(1)=H(1,0)=h'(0)=h'^{-1}(1)=\delta(1).$$
   Also, since
\begin{align*}
 & p\circ f'(t)=p\circ H(0,t)=p\circ \widetilde{F}\circ J_0(0)=p\circ \widetilde{F}(0,0)=F(0,0)=x_0=C_{x_0}(t),\quad\\
  & p\circ\eta=(p\circ f)*(p\circ g)*(p\circ h)^{-1}=F(0,-)*F(-,1)*(F(1,-))^{-1}=\quad \\
  &C_{x_0}*C_{x_0}*(C_{x_0})^{-1}=C_{x_0},\quad\\
  &p\circ h'(t)=p\circ H(1,t)=p\circ \widetilde{F}\circ J_0(1)=p\circ \widetilde{F}(1,0)=F(1,0)=x_0=C_{x_0}(t),\quad \\
\end{align*}
we have $p\circ \delta=(p\circ f')\ast(p\circ \eta)\ast(p\circ h'^{-1})=C_{x_0}$.\ Hence $\delta$ belongs to the fiber $p^{-1}(x_0)$ and so by assumption, $\delta$ is null.\ On the other hand, by definitions of $\gamma, \delta$ and $H, \widetilde{F}$ $\gamma\ast \delta\simeq C_{\tilde{x}_{0}}$, rel\ $\dot{I}$, which implies $\gamma\simeq C_{\tilde{x}_{0}}$, rel\ $\dot{I}$ and so $\wt{\alpha}\simeq\wt{\beta}$,\ rel\ $\dot{I}$.
\end{proof}
Obviously, if every loop in fibers of an h-fibration $p$ is constant, then $p$ has wuphl, but the converse is not necessarily true.\ For example, the h-fibration $pr_1:X\times Y\to X$, when $Y$ is any non-singleton simply connected space, has wuphl and also has nonconstant paths in its fibers. Since the fibers of two fiber homotopy equivalence fibrations (h-fibrations) have the same homotopy type, by Proposition \ref{P2.6} we have the following result.
\begin{corollary}\label{C2.12}\
  \\ (i) If two h-fibrations are fiber homotoy equivalent and one of them has wuphl, then so has the other one.
  \\ (ii) If an h-fibration is fiber homotoy equivalent to a fibration and one of them has wuphl, then so has the other one.

\end{corollary}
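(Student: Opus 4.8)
The plan is to derive both parts from Proposition~\ref{P2.6}, which reinterprets wuphl for an h-fibration as the homotopical condition that every loop in every fiber is nullhomotopic, rel $\dot{I}$. Part (ii) is then immediate from part (i): a fibration is in particular an h-fibration, so the two maps in (ii) are fiber homotopy equivalent h-fibrations and (i) applies. Thus all the content is in (i).

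For (i), let $p_1:E_1\to B$ and $p_2:E_2\to B$ be fiber homotopy equivalent h-fibrations, witnessed by fiber preserving maps $f:E_1\to E_2$, $g:E_2\to E_1$ together with fiber homotopies $g\circ f\simeq_{p_1}1_{E_1}$ and $f\circ g\simeq_{p_2}1_{E_2}$; assume, say, that $p_1$ has wuphl. The first step is to pass to fibers: for each $b\in B$, since $p_2\circ f=p_1$ and $p_1\circ g=p_2$ the maps $f$ and $g$ restrict to maps $f_b:p_1^{-1}(b)\to p_2^{-1}(b)$ and $g_b:p_2^{-1}(b)\to p_1^{-1}(b)$; and since a homotopy realising $g\circ f\simeq_{p_1}1_{E_1}$ is constant along $p_1$, it carries $p_1^{-1}(b)\times I$ into $p_1^{-1}(b)$ and so restricts to an ordinary homotopy $g_b\circ f_b\simeq 1_{p_1^{-1}(b)}$; likewise $f_b\circ g_b\simeq 1_{p_2^{-1}(b)}$. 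Hence $f_b$ is a homotopy equivalence of fibers with homotopy inverse $g_b$ --- this is the concrete content of the remark preceding Proposition~\ref{P2.5}.

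The second step is to transport loops. Given a loop $\lambda$ in $p_2^{-1}(b)$, the loop $g_b\circ\lambda$ lies in $p_1^{-1}(b)$, hence is nullhomotopic, rel $\dot{I}$, by Proposition~\ref{P2.6} applied to $p_1$; applying the continuous map $f_b$ to a nullhomotopy shows that $f_b\circ g_b\circ\lambda$ is nullhomotopic, rel $\dot{I}$, in $p_2^{-1}(b)$. Finally, because $f_b\circ g_b\simeq 1_{p_2^{-1}(b)}$, the loops $\lambda$ and $f_b\circ g_b\circ\lambda$ are freely homotopic in $p_2^{-1}(b)$, so they represent elements of $\pi_1$ of the ambient path component that correspond under the change-of-basepoint isomorphism along the trace of that homotopy; since one of these elements is trivial, so is the other, and $\lambda$ is nullhomotopic, rel $\dot{I}$. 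Thus every loop in every fiber of $p_2$ is nullhomotopic, and Proposition~\ref{P2.6} yields that $p_2$ has wuphl. Interchanging $p_1$ and $p_2$ disposes of the symmetric case, proving (i).

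The argument is largely bookkeeping, and the one point needing care is the very last one: the homotopy produced by $f_b\circ g_b\simeq 1_{p_2^{-1}(b)}$ is not relative to the basepoint, so one must invoke the standard fact that a loop freely homotopic to a nullhomotopic loop is itself nullhomotopic rel endpoints (equivalently, that the trivial element of $\pi_1$ forms its own conjugacy class). I do not anticipate any other difficulty.
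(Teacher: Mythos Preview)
Your proposal is correct and follows essentially the same approach as the paper: the paper's one-sentence justification is that corresponding fibers of fiber homotopy equivalent maps have the same homotopy type, so Proposition~\ref{P2.6} transfers wuphl from one to the other. Your argument is simply a fleshed-out version of this, making explicit why the fiber homotopy equivalence restricts to a homotopy equivalence on each fiber and why ``every loop is nullhomotopic'' is a homotopy-type invariant (including the basepoint-change subtlety, which the paper leaves implicit).
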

          A map $p:E\to B$ has path lifting property if for a given path $\alpha:I\to B$ with $\alpha(0)\in p(E)$ and every $e\in p^{-1}(\alpha (0))$ there exists a path $\widetilde{\alpha}$ in $E$ started at $e$, such that $p\circ \widetilde{\alpha}=\alpha$.
We know that every fibration has the path lifting property and the following example shows that an h-fibration does not necessarily have the path lifting property.
\begin{example}\label{ex1}
 Let $E=([-1,0]\times [-1,0])\cup ([0,1]\times [0,1])$, $B=[-1,1]$ and $p$ be the projection on the first component.\ Then $p$ is an h-fibration because for given maps $F:X\times I\to B$ and $\widetilde{f}:X\to E$ with $p\circ \widetilde{f}=F\circ J_0$ it suffices to define $\wt{F}:X\times I\to E$ by $\wt{F}(x,t)=(F(x,t),0)$. But, there is no lift for the path $\alpha(t)=t$, started from $(0,-\dfrac{1}{2})$.
\end{example}
In the following we give a homotopical analogue of path lifting property and show that h-fibrations enjoy this property.
\begin{definition}
   A map $p:E\to B$ has homotopically path lifting property if for a given $b\in B$, $e\in p^{-1}(b)$ and a path $\alpha$ in $B$ beginning at $b$, there exists a path $\wt\alpha$ in $E$ such that $\widetilde{\alpha}(0)=e$ and $p\circ \widetilde{\alpha}\simeq \alpha$,\ rel $\dot{I}$.
\end{definition}
\begin{theorem}\label{P2.13}
    An h-fibration has homotopically path lifting property.
  \begin{proof}
  Let $p:E\to B$ be an h-fibration, $\alpha$ be a path in $B$ and $e\in p^{-1}(\alpha(0))$.\ Also, let $F:{\{\ast}\}\times I\to B$ be the homotopy $F(\ast,t)=\alpha(t)$ and $\widetilde{f}:{\{\ast}\}\to E$ be the map $\widetilde{f}(\ast)=e$. Then $p\circ \widetilde{f}=F\circ J_0$ and since $p$ is an h-fibration, there is a homotopy $\widetilde{F}:{\{\ast}\}\times I\to E$ and a fiber homotopy $H:{\{\ast}\}\times I\to E$ such that $p\circ \widetilde{F}=F$ and $H:\widetilde{F}\circ J_0\simeq_p \widetilde{f}$.\ Let $\widetilde{\alpha}$ be the path in $E$ defined by $\widetilde{\alpha}(t)=\widetilde{F}(\ast,t)$.\ Then $H(\ast,0)=\widetilde{F}\circ J_0(\ast)=\widetilde{\alpha}(0)$, $H(\ast,1)=\widetilde{f}(\ast)=e$ and $$\ p\circ H(\ast,t)=p\circ \widetilde{F}\circ J_0(\ast)=p\circ \widetilde{f}(\ast)=\alpha(0).$$ Let $\widetilde{\gamma}:=H(\ast,-)$ which is a path in the fiber $p^{-1}(\alpha(0))$ from $\widetilde{\alpha}(0)$ to $e$.\ Then $\widetilde{\beta}=\widetilde{\gamma}^{-1}\ast \widetilde{\alpha}$ is a homotopical lift of $\alpha$ started from $e$, because $$p\circ \widetilde{\beta}=p\circ (\widetilde{\gamma}^{-1}\ast \widetilde{\alpha})=(p\circ \widetilde{\gamma}^{-1})\ast (p\circ \widetilde{\alpha})\simeq C_{\alpha(0)}\ast \alpha\simeq \alpha,\ rel\ \dot{I}.$$
  \end{proof}
\end{theorem}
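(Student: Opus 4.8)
The plan is to apply the weak covering homotopy property of $p$ with the test space taken to be a single point $\{\ast\}$. Encode the given data as follows: let $F:\{\ast\}\times I\to B$ be the homotopy $F(\ast,t)=\alpha(t)$, and let $\widetilde f:\{\ast\}\to E$ be the map $\widetilde f(\ast)=e$. Since $p(e)=b=\alpha(0)$, the required compatibility $p\circ\widetilde f=F\circ J_0$ holds, so wchp produces a homotopy $\widetilde F:\{\ast\}\times I\to E$ with $p\circ\widetilde F=F$ together with a fiber homotopy $H:\widetilde F\circ J_0\simeq_p\widetilde f$.

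Reading off paths, $\widetilde\alpha:=\widetilde F(\ast,-)$ is a genuine lift of $\alpha$, i.e.\ $p\circ\widetilde\alpha=\alpha$ exactly, but its initial point $\widetilde F(\ast,0)$ need not be $e$. The fiber homotopy $H$ is precisely the tool to fix this: because $p\circ H(\ast,t)=p\circ\widetilde f(\ast)=b$ for all $t$, the path $\widetilde\gamma:=H(\ast,-)$ lies entirely in the fiber $p^{-1}(b)$ and runs from $\widetilde\alpha(0)=H(\ast,0)$ to $e=H(\ast,1)$.

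It then remains to concatenate and check. Set $\widetilde\beta:=\widetilde\gamma^{-1}\ast\widetilde\alpha$; then $\widetilde\beta(0)=\widetilde\gamma^{-1}(0)=\widetilde\gamma(1)=e$, and $p\circ\widetilde\beta=(p\circ\widetilde\gamma)^{-1}\ast(p\circ\widetilde\alpha)=C_b\ast\alpha$, which is homotopic to $\alpha$ rel $\dot{I}$ since prepending the constant path $C_b$ alters a path only up to homotopy fixing endpoints. Thus $\widetilde\beta$ is a homotopical lift of $\alpha$ beginning at $e$, which is what the homotopically path lifting property demands.

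I do not expect a genuinely hard step: the entire content is the observation that the fiber-homotopy slack built into wchp --- the gap between $\widetilde F\circ J_0\simeq_p\widetilde f$ and $\widetilde F\circ J_0=\widetilde f$ --- materializes as an honest path inside one fiber, and such a path can be prepended to correct the basepoint. What genuinely cannot be obtained is a lift satisfying both $p\circ\widetilde\beta=\alpha$ and $\widetilde\beta(0)=e$ on the nose; that is the ordinary path lifting property, which Example \ref{ex1} shows already fails for h-fibrations. Hence replacing equality by homotopy rel $\dot{I}$ in the conclusion is essential, not a matter of convenience.
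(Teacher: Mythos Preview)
Your proof is correct and follows essentially the same approach as the paper: apply wchp with a one-point test space, read off the lift $\widetilde\alpha=\widetilde F(\ast,-)$ and the fiber path $\widetilde\gamma=H(\ast,-)$, and concatenate $\widetilde\gamma^{-1}\ast\widetilde\alpha$ to correct the initial point at the cost of an endpoint-preserving homotopy. Your additional remark about Example~\ref{ex1} showing that exact path lifting genuinely fails is apt and matches the paper's own motivation for introducing the homotopical version.
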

We know that restriction of a fibration on each of whose path components is a fibration and for maps with locally path connected total space, we have the converse (see, \cite[Lemma 2.3.1 and Theorem 2.3.2]{SP}).\ These results are satisfied for h-fibrations with a simulated proof which is left to the readers.
  \begin{proposition}
    Let $p:E\to B$ be a map.\ If $E$ is locally path connected, then $p$ is an h-fibration if and only if for each path component $A$ of $E$, $p(A)$ is a path component of $B$ and $p|_A:A\to p(A)$ is an h-fibration.
  \end{proposition}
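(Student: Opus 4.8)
The plan is to adapt the proofs of \cite[Lemma 2.3.1 and Theorem 2.3.2]{SP} to the weak setting, using throughout the elementary fact that any path in $E$ (resp. in $B$) whose image meets a path component $A$ of $E$ (resp. a path component of $B$) lies entirely in that component. Neither implication needs this in a clever way except for bookkeeping; the hypothesis that $E$ is locally path connected will enter \emph{only} in the ``if'' direction, where it guarantees that the path components of $E$ are open, hence clopen, so that partial maps can be glued.

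For the \textbf{only if} direction, assume $p$ is an h-fibration and let $A$ be a path component of $E$. First I would check that $p(A)$ is a path component of $B$: it is path connected, hence contained in some path component $B_0$ of $B$; given $b\in B_0$, pick $a\in A$ and a path $\alpha$ in $B_0$ from $p(a)$ to $b$, and apply the homotopically path lifting property (Theorem \ref{P2.13}) to get a path $\widetilde\alpha$ in $E$ with $\widetilde\alpha(0)=a$ and $p\circ\widetilde\alpha\simeq\alpha$, rel $\dot I$; since $\widetilde\alpha$ starts in $A$ it lies in $A$, so $b=p(\widetilde\alpha(1))\in p(A)$. Next, to see $p|_A$ is an h-fibration, take a space $X$ and maps $\widetilde f:X\to A$, $F:X\times I\to p(A)$ with $(p|_A)\circ\widetilde f=F\circ J_0$, and view them via the inclusions $A\hookrightarrow E$, $p(A)\hookrightarrow B$ as a lifting problem for $p$. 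An h-fibration solution provides $\widetilde F:X\times I\to E$ with $p\circ\widetilde F=F$ and a fiber homotopy $G:\widetilde F\circ J_0\simeq_p\widetilde f$. Since for fixed $x$ the path $G(x,-)$ ends at $\widetilde f(x)\in A$, it lies in $A$, whence $\widetilde F(x,0)=G(x,0)\in A$; then the path $\widetilde F(x,-)$ starts in $A$, hence lies in $A$. Thus $\widetilde F$ and $G$ corestrict to $A$, and $\widetilde F:X\times I\to A$ together with $G$ solves the lifting problem for $p|_A$.

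For the \textbf{if} direction, assume $E$ is locally path connected and that each $p|_A:A\to p(A)$ is an h-fibration with $p(A)$ a path component of $B$. Local path connectedness makes the path components $\{A\}$ of $E$ clopen, partitioning $E$. Given a lifting problem $\widetilde f:X\to E$, $F:X\times I\to B$ with $p\circ\widetilde f=F\circ J_0$, set $X_A=\widetilde f^{-1}(A)$; the $X_A$ are clopen, partition $X$, and $X\times I=\bigsqcup_A(X_A\times I)$. For $x\in X_A$ the path $F(x,-)$ starts at $p(\widetilde f(x))\in p(A)$, hence stays in $p(A)$, so $F$ restricts to $F^A:X_A\times I\to p(A)$ and $(p|_A)\circ(\widetilde f|_{X_A})=F^A\circ J_0$. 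Solving this with the h-fibration $p|_A$ yields $\widetilde F^A:X_A\times I\to A$ with $(p|_A)\circ\widetilde F^A=F^A$ and a fiber homotopy $G^A:\widetilde F^A\circ J_0\simeq_{p|_A}\widetilde f|_{X_A}$. Since the pieces $X_A\times I$ are clopen, the $\widetilde F^A$ glue to a continuous $\widetilde F:X\times I\to E$ with $p\circ\widetilde F=F$, and the $G^A$ glue to a homotopy $G:\widetilde F\circ J_0\simeq_p\widetilde f$ (the fiber condition $p\circ G(\cdot,s)=p\circ\widetilde f$ holding piecewise). Hence $p$ is an h-fibration.

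The \textbf{main obstacle} is purely organizational rather than conceptual: in the first implication one must verify that the lift and the fiber homotopy produced by the h-fibration property actually take values in $A$ — this is precisely where ``$A$ is a path component'' is invoked, once at the initial level $t=0$ via the fiber homotopy and once along each fibre path $\widetilde F(x,-)$ — and in the second implication one must be careful that the clopen partition $X=\bigsqcup_A X_A$ coming from local path connectedness is what legitimizes assembling the partial lifts $\widetilde F^A$ and partial fiber homotopies $G^A$ into globally continuous maps. Everything else is the routine checking that the glued maps satisfy $p\circ\widetilde F=F$ and the fiber homotopy identities, exactly as in \cite{SP}.
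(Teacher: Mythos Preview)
Your proposal is correct and is exactly the adaptation the paper has in mind: the paper does not actually prove this proposition but simply remarks that the proofs of \cite[Lemma 2.3.1 and Theorem 2.3.2]{SP} carry over to h-fibrations and leaves the details to the reader. Your argument fills in those details faithfully, including the one genuinely new point---that in the ``only if'' direction the fiber homotopy $G$ (not just the lift $\widetilde F$) must be shown to land in $A$, which you handle correctly by tracking the path $G(x,-)$ back from $\widetilde f(x)\in A$.
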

   Let $p:E\to B$ be a map and define a subspace $\overline{B}\subseteq E\times B^I$ as follows:
 $$\overline{B}={\{(e,\omega)\in E\times B^I|\omega(0)=p(e)}\}.$$
Recall that, a lifting function for $p$ is a map $\lambda:\overline{B}\to E^I$ which assigns to each point $e\in E$ and any path $\omega$ in $B$ starting at $p(e)$ a path $\lambda(e,\omega)$ in $E$ starting at $e$ that is a lift of $\omega$.  Existence of a lifting function for a map $p:E\to B$ is equivalent to $p$ is a fibration (see \cite[Theorem 2.7.8]{SP}). For h-fibrations we introduce a homotopical version of lifting function and show that every h-fibration has one of them.
\begin{definition}
 A \emph{homotopically lifting function for $p$} is a map $\lambda:\overline{B}\to E^I$
 which assigns to each point $e\in E$ and any path $\omega$ in $B$ starting at $p(e)$ a path $\lambda(e,\omega)$ in $E$ starting at $e$ that is a homotopical lift of $\omega$.
 \end{definition}

\begin{theorem}\label{p3.12}
Every h-fibration has a homotopically lifting function.
\begin{proof}
  Let $p:E\to B$ be an h-fibration.\ Define two maps $\wt f:\overline{B}\to E$ and $F:\overline{B}\times I\to B$ by $\wt f(e,\omega)=e$ and $F((e,\omega),t)=\omega(t)$, respectively.\ Since $F\circ J_0(e,\omega)=F((e,\omega),0)=\omega(0)=p(e)=p\circ \wt f(e,\omega)$ and $p$ is an h-fibration, there exist homotopies $\widetilde{F}, H:\overline{B}\times I\to E$ such that $p\circ \widetilde{F}=F$ and $H:\widetilde{F}\circ J_0\simeq_p \wt f$.\ Define $\lambda:\overline{B}\to E^I$ by $\lambda(e,\omega)(t)=\widetilde{F}((e,\omega),t)$ which is continuous.\ Let $\tilde{x}:=\lambda(e,\omega)(0)$.\ Then $\tilde{x}\in p^{-1}(p(e))$ because $p(\tilde{x})=p\circ \lambda(e,\omega)(0)=p\circ \widetilde{F}\circ J_0(e,\omega)=p\circ \widetilde{f}(e,\omega)=p(e)$.\ Similar to the proof of Proposition \ref{P2.13}, there is a path $\gamma$ in the fiber $p^{-1}(p(e))$ from $e$ to $\tilde{x}$.\ Define $\lambda':\overline{B}\to E^I$ by $\lambda'(e,\omega)=\gamma\ast\lambda(e,\omega)$.\ Then $\lambda'(e,\omega)(0)=\gamma(0)=e$ and $$p\circ \lambda'(e,\omega)=p\circ (\gamma\ast\lambda(e,\omega))=(p\circ \gamma)\ast(p\circ \lambda(e,\omega))=C_{p(e)}\ast p\circ \lambda(e,\omega)$$ $$=C_{p(e)}\ast p\circ \widetilde{F}((e,\omega),-)=C_{p(e)}\ast F((e,\omega),-)=C_{p(e)}\ast \omega\simeq \omega,\ \ \\rel\ \dot{I}.$$ Therefore $\lambda'$ is a homotopically lifting function for $p$.
\end{proof}
\end{theorem}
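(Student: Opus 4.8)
The plan is to feed the weak covering homotopy property of $p$ its ``universal'' test datum, which lives on the space $\overline B$ itself. First I would put $\widetilde f:\overline B\to E$, $\widetilde f(e,\omega)=e$, and $F:\overline B\times I\to B$, $F((e,\omega),t)=\omega(t)$; these are continuous (evaluation $B^I\times I\to B$ is continuous), and $F\circ J_0(e,\omega)=\omega(0)=p(e)=p\circ\widetilde f(e,\omega)$, so the pair $(\widetilde f,F)$ is an admissible lifting problem for $p$. Since $p$ is an h-fibration, wchp yields homotopies $\widetilde F,H:\overline B\times I\to E$ with $p\circ\widetilde F=F$ and $H:\widetilde F\circ J_0\simeq_p\widetilde f$.

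Next I would transpose $\widetilde F$ to a map $\lambda:\overline B\to E^I$ by $\lambda(e,\omega)(t)=\widetilde F((e,\omega),t)$ (exponential law); then $p\circ\lambda(e,\omega)=F((e,\omega),-)=\omega$, so $\lambda(e,\omega)$ is an honest lift of $\omega$, but it begins at $\widetilde x_{(e,\omega)}:=\widetilde F((e,\omega),0)$ rather than at $e$. The role of the second homotopy $H$ is precisely to repair this: because $H$ is a fiber homotopy with respect to $p$, one has $p\circ H((e,\omega),t)=p\circ(\widetilde F\circ J_0)(e,\omega)=p(e)$ for all $t$, so the slice $t\mapsto H((e,\omega),t)$ is a path lying entirely in the fiber $p^{-1}(p(e))$ running from $\widetilde x_{(e,\omega)}$ to $e$, and it depends continuously on $(e,\omega)$. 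So I would set $\gamma_{(e,\omega)}:=\bigl(H((e,\omega),-)\bigr)^{-1}$, a path in $p^{-1}(p(e))$ from $e$ to $\widetilde x_{(e,\omega)}$, and define $\lambda'(e,\omega):=\gamma_{(e,\omega)}\ast\lambda(e,\omega)$.

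Finally I would check that $\lambda'$ does the job: $\lambda'(e,\omega)(0)=\gamma_{(e,\omega)}(0)=e$, and
$$p\circ\lambda'(e,\omega)=(p\circ\gamma_{(e,\omega)})\ast(p\circ\lambda(e,\omega))=C_{p(e)}\ast\omega\simeq\omega,\ \mathrm{rel}\ \dot I,$$
since prepending a constant stretch to a path is homotopic rel endpoints to the path itself. Hence $\lambda'(e,\omega)$ is a homotopical lift of $\omega$ starting at $e$, i.e.\ $\lambda'$ is a homotopically lifting function for $p$. The one point that genuinely needs care is the continuity of $\lambda':\overline B\to E^I$: this follows from continuity of $\widetilde F$ and $H$ via the exponential law together with the fact that path-inversion and the concatenation map on composable pairs of paths are continuous for the compact-open topology; I expect this bookkeeping to be the only mild obstacle, the substantive input being the single application of wchp to the universal datum $(\widetilde f,F)$.
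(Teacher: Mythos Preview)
Your proof is correct and follows essentially the same route as the paper: both apply wchp to the universal datum $(\widetilde f,F)$ on $\overline B$, transpose $\widetilde F$ to a path-map $\lambda$, and then prepend the fiber path coming from the fiber homotopy $H$ to force the correct initial point. You are in fact a bit more explicit than the paper---which merely cites its earlier Proposition on homotopical path lifting for the existence of $\gamma$---by identifying $\gamma_{(e,\omega)}$ directly as the reversed $H$-slice, and you address the continuity of $\lambda'$ (via the exponential law and continuity of inversion/concatenation), a point the paper leaves implicit.
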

\begin{remark}\label{R3.16}
  The converse of Theorem \ref{p3.12} is not true.\ As an example, let $E=I\times I-{\{(0,\frac{1}{2})}\}$,\ $B=I$ and $p$ be the projection on the first component.\ Since the fibers of $p$ do not have the same homotopy, $p$ is not an h-fibration.\ However, $p$ has a homotopically lifting function. For, let $e\in E$ and $\omega$ be a path in $B$ starting at $p(e)$.\ Also define two paths $\alpha, \beta$ in $E$ by $\alpha(t)=(1-t)e+tA$ and $\beta(t)=(1-t)A+t(\omega(1),0)$, where $A=(1,\frac{1}{2})$.
  \ Define $\lambda:\overline{B}\to E^I$ such that $\lambda(e,\omega)(t)=(\alpha\ast\beta)(t)$.\ Then $\lambda(e,\omega)$ is a path starting at $\lambda(e,\omega)(0)=\alpha(0)=e$.\ Moreover, since $$p\circ \lambda(e,\omega)(0)=p\circ \alpha(0)=p(e)=\omega(0),$$ $$p\circ \lambda(e,\omega)(1)=p\circ \beta(1)=p(\omega(1),0)=\omega(1)$$ and $B$ is simply connected, $p\circ \lambda(e,\omega)(t)\simeq \omega(t)\ rel\ \dot{I}$, as desired.
\end{remark}


\section{Category of h-Fibrations}
In this section, $\mathrm{Fib}$ and $\mathrm{Fib(B)}$ are the category of fibrations and fibrations over $B$, and have the categories $\mathrm{Fibu}$ and $\mathrm{Fibu(B)}$ (with the extra assumption unique path lifting) as subcategory, respectively (see \cite{SP}).\ When we deal with fibrations with wuphl instead of upl, we have the categories $\mathrm{Fibwu}$ and $\mathrm{Fibwu(B)}$ \cite{MPT}, for which
 $$\mathrm{Fibu}\leq \mathrm{Fibwu},\ \ \ \mathrm{Fibu(B)}\leq \mathrm{Fibwu(B)}.$$

 To construct new categories by h-fibrations, we need to the following essential proposition.
 \begin{proposition}\label{P3.1}
   Composition of two h-fibrations is an h-fibration.
   \begin{proof}
 Let $p':E'\to E$ and $p:E\to B$ be two h-fibrations, $\widetilde{f}:X\to E'$ and $F:X\times I\to B$ be two maps such that $(p\circ p')\circ \widetilde{f}=F\circ J_0$.\ Since $p$ is an h-fibration, there exist homotopies $\widetilde{F}, H:X\times I\to E$ such that $p\circ \widetilde{F}=F$ and $H:p'\circ \widetilde{f}\simeq_p\widetilde{F}\circ J_0 $. Since $p'\circ \widetilde{f}=H\circ J_0$ and $p'$ is an h-fibration, there exist homotopies $\widetilde{H}, D:X\times I\to E'$ such that $p'\circ \widetilde{H}=H$ and $D:\widetilde{f}\simeq_{p'}\widetilde{H}\circ J_0$.\ Let $J_1:X\to X\times I$ be the map $J_1(x)=(x,1)$. Since $p'\circ (\widetilde{H}\circ J_1(x))=p'\circ \widetilde{H}(x,1)=H(x,1)=\widetilde{F}\circ J_0(x)$ and $p'$ is an h-fibration, there exist homotopies $\widetilde{K}, T:X\times I\to E'$ such that $p'\circ \widetilde{K}=\widetilde{F}$ and $T:\widetilde{H}\circ J_1\simeq_{p'}\widetilde{K}\circ J_0$. Since, $(p\circ p')\circ \widetilde{K}=p\circ \widetilde{F}=F$, $\wt K$ is the desired homotopy.  Also, by Proposition \ref{P2.2}, $D:\widetilde{f}\simeq_{p\circ p'}\widetilde{H}_0$ and $T:\widetilde{H}_1\simeq_{p\circ p'}\widetilde{K}_0$.
   Moreover,
 \\$(p\circ p')\circ \widetilde{H}(x,t)=p\circ H(x,t)=p\circ H(x,0)=(p\circ p')\circ \wt H(x,0)=(p\circ p')\circ \wt H_0(x)$,
  \\$(p\circ p')\circ \widetilde{H}(x,t)=p\circ H(x,t)=p\circ H(x,1)=(p\circ p')\circ \wt H(x,1)=(p\circ p')\circ \wt H_1(x)$, which imply that $\wt H:\widetilde{H}_0\simeq_{p\circ p'}\widetilde{H}_1$. Since fiber homotopy is an equivalence relation, $\widetilde{f}\simeq_{p\circ p'}\widetilde{K}\circ J_0$ and so the result holds.
   \end{proof}
  \end{proposition}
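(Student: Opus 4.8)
The plan is to establish the weak covering homotopy property for $p\circ p'$ directly, by lifting first through $p$ and then, in two stages, through $p'$. So suppose $\wt f:X\to E'$ and $F:X\times I\to B$ satisfy $(p\circ p')\circ\wt f=F\circ J_0$. First I would read this as a lifting problem for $p$ with initial map $p'\circ\wt f$: since $p$ is an h-fibration there is a homotopy $\wt F:X\times I\to E$ with $p\circ\wt F=F$, together with a $p$-fiber homotopy $H:p'\circ\wt f\simeq_p\wt F\circ J_0$. The map $\wt F$ already lies over $F$ through $p$; what remains is to promote it one level higher, through $p'$, while bookkeeping the fiber homotopies.

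The second stage uses the h-fibration property of $p'$ twice. Viewing $H$ as a homotopy in $E$ that starts at $p'\circ\wt f$ (i.e.\ $H\circ J_0=p'\circ\wt f$), I would lift it: there exist $\wt H,D:X\times I\to E'$ with $p'\circ\wt H=H$ and a $p'$-fiber homotopy $D:\wt f\simeq_{p'}\wt H\circ J_0$. Now $\wt H\circ J_1$ lies over $H\circ J_1=\wt F\circ J_0$, so viewing $\wt F$ as a homotopy in $E$ starting at $p'\circ(\wt H\circ J_1)$, I would lift it once more: there exist $\wt K,T:X\times I\to E'$ with $p'\circ\wt K=\wt F$ and a $p'$-fiber homotopy $T:\wt H\circ J_1\simeq_{p'}\wt K\circ J_0$. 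Then $(p\circ p')\circ\wt K=p\circ\wt F=F$, so $\wt K$ is the candidate covering homotopy.

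It remains to verify $\wt K\circ J_0\simeq_{p\circ p'}\wt f$, which is where a little care is needed. Proposition \ref{P2.2}(i) upgrades the two $p'$-fiber homotopies above: $D$ yields $\wt f\simeq_{p\circ p'}\wt H\circ J_0$ and $T$ yields $\wt H\circ J_1\simeq_{p\circ p'}\wt K\circ J_0$. The missing link is $\wt H\circ J_0\simeq_{p\circ p'}\wt H\circ J_1$, and the key observation is that $\wt H$ itself serves as such a fiber homotopy: since $H$ is a $p$-fiber homotopy, $p\circ H(x,t)=p\circ(p'\circ\wt f)(x)$ is independent of $t$, hence $(p\circ p')\circ\wt H(x,t)=p\circ H(x,t)$ is constant in $t$. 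Concatenating the three fiber homotopies and using that $\simeq_{p\circ p'}$ is an equivalence relation (Proposition \ref{P2.2}(ii)) gives $\wt f\simeq_{p\circ p'}\wt K\circ J_0$, as required.

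I expect the main obstacle to be organizational rather than conceptual: after the first lift, the endpoint $\wt H\circ J_1$ does not literally equal the starting map $\wt F\circ J_0$ of the homotopy one still wants to lift (they agree only after applying $p'$), which is exactly what forces the second invocation of the h-fibration property for $p'$ and produces a chain of three fiber homotopies to reconcile — the nonobvious point being to reinterpret $\wt H$ as a $(p\circ p')$-fiber homotopy between its own ends.
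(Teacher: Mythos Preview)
Your proof is correct and essentially identical to the paper's: you lift first through $p$, then invoke the h-fibration property of $p'$ twice (once on $H$, once on $\wt F$), and assemble the chain $\wt f\simeq_{p\circ p'}\wt H\circ J_0\simeq_{p\circ p'}\wt H\circ J_1\simeq_{p\circ p'}\wt K\circ J_0$ exactly as the authors do. Even your ``nonobvious point'' --- that $\wt H$ is itself a $(p\circ p')$-fiber homotopy between its ends because $p\circ H$ is constant in $t$ --- is precisely the observation the paper highlights.
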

It is straightforward that composition of two maps with wuphl is a map with wuphl \cite[Proposition 4.1]{MPT} and so we have the following proposition.
 \begin{proposition}\label{P3.2}
   Composition of h-fibrations with wuphl is an h-fibration with wuphl.
   \end{proposition}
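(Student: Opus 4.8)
The plan is to split the statement into its two independent assertions and discharge each one with a result already available. Write $p':E'\to E$ and $p:E\to B$ for the two given h-fibrations, both assumed to have wuphl, and set $q:=p\circ p'$. What has to be shown is exactly that $q$ is an h-fibration \emph{and} that $q$ has wuphl; these are two separate features of the single map $q$, so I would establish them one at a time and then simply record that they hold simultaneously.

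The first assertion requires no work beyond citing Proposition \ref{P3.1}: the composition of two h-fibrations is an h-fibration, hence $q$ is an h-fibration. The wuphl hypothesis is not used at this stage, and the particular lifting data manufactured inside the proof of Proposition \ref{P3.1} play no further role, so I would not need to reopen that argument.

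For the second assertion I would give the short direct diagram chase (this is the content of \cite[Proposition 4.1]{MPT}, which one may either invoke verbatim or reprove in two lines). Let $\alpha,\beta:I\to E'$ be paths with $\alpha(0)=\beta(0)$, $\alpha(1)=\beta(1)$ and $q\circ\alpha\simeq q\circ\beta$ rel $\dot{I}$. Then $p'\circ\alpha$ and $p'\circ\beta$ are paths in $E$ with matching endpoints (applying the function $p'$ preserves the endpoint equalities), and $p\circ(p'\circ\alpha)=q\circ\alpha\simeq q\circ\beta=p\circ(p'\circ\beta)$ rel $\dot{I}$; since $p$ has wuphl, this yields $p'\circ\alpha\simeq p'\circ\beta$ rel $\dot{I}$. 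Now $\alpha$ and $\beta$ still have matching endpoints and $p'\circ\alpha\simeq p'\circ\beta$ rel $\dot{I}$, so since $p'$ has wuphl we obtain $\alpha\simeq\beta$ rel $\dot{I}$. Thus $q$ has wuphl, and combining this with the previous paragraph, $q=p\circ p'$ is an h-fibration with wuphl. I do not expect any real obstacle: the h-fibration part is precisely Proposition \ref{P3.1}, the wuphl part is a two-step chase through the defining lifting condition, and the only point deserving a word of care is the trivial observation that composing paths with $p'$ does not disturb the hypotheses on endpoints.
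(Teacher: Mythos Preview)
Your proposal is correct and follows exactly the paper's approach: the h-fibration part is Proposition \ref{P3.1}, and the wuphl part is precisely \cite[Proposition 4.1]{MPT}, which the paper cites without reproving and which you have written out in full.
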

Now, we can define category of h-fibrations, $\mathrm{hFib}$ and its subcategory, category of h-fibrations with wuphl, $\mathrm{hFibwu}$ in which the objects are topological spaces and morphisms are h-fibrations and h-fibrations with wuphl, respectively.\ Moreover, for a given space $B$, we can consider other categories, $\mathrm{hFib(B)}$ and  $\mathrm{hFibwu(B)}$, whose objects are h-fibrations and h-fibrations with wuphl over $B$ and morphisms are the commutative triangles.

    By Theorem \ref{T2.10}, since every fibration is an h-fibration, we have the following diagram of inclusion relations between categories.\\ \\ \\ \\

\unitlength 1mm 
\linethickness{0.4pt}
\ifx\plotpoint\undefined\newsavebox{\plotpoint}\fi 
\begin{picture}(97.544,50.979)(0,0)
\put(36.474,50.979){Fibu}
\put(60.755,50.874){Fibwu}
\put(94.075,50.874){Fib}
\put(33.741,27.119){hFibu}
\put(60.334,27.014){hFibwu}
\put(93.34,27.224){hFib\ .}
\put(46.249,50.874){\vector(1,0){11.878}}
\put(46.249,27.224){\vector(1,0){11.878}}
\put(78.729,50.874){\vector(1,0){11.878}}
\put(78.729,27.224){\vector(1,0){11.878}}
\put(67.272,44.988){\vector(0,-1){11.142}}
\put(97.544,44.883){\vector(0,-1){11.142}}
\put(40.153,33.951){\vector(0,-1){.07}}\put(40.153,44.988){\vector(0,1){.07}}\put(40.153,44.988){\line(0,-1){11.0368}}
\end{picture}

 It is notable that we have a similar diagram for the categories constructed over the base space $B$.
Also, note that in the above diagram, the inclusions are proper.
The first row is proper \cite[Example 3.3]{MPT} and the second row is proper, since a fibration is an h-fibration.\ Moreover, Example \ref{ex1} shows that the second and the third column are proper.

  Now, we study the existence of products and coproducts for these categories.\
 \begin{proposition}Product of two h-fibrations is an h-fibration.\label{P3.4}
   \begin{proof}
   Let $p:E\to B$ and $p':E'\to B'$ be two h-fibrations, $\widetilde{f}:X\to E\times E'$ and $F:X\times I\to B\times B'$ be maps such that $(p\times p')\circ \widetilde{f}=F\circ J_0$. Since $p\circ pr_1\circ\widetilde{f}=(pr_{1}\circ F)\circ J_0$, $p'\circ pr_2\circ \widetilde{f}=(pr_{2}\circ F)\circ J_0$ and $p$ and $p'$ are h-fibration, there exist $\widetilde{F}_1:X\times I\to E$ and $\widetilde{F}_2:X\times I\to E'$ such that $p\circ \widetilde{F}_1=pr_1\circ F$, $p'\circ \widetilde{F}_2=pr_2\circ F$, $\widetilde{F}_1\circ J_0\simeq_p pr_1\circ \wt{f}$ and $\widetilde{F}_2\circ J_0\simeq_{p'} pr_2\circ \wt{f}$.\ Define $\widetilde{F}:X\times I\to E\times E'$ by $\widetilde{F}(x,t)=(\widetilde{F}_1(x,t), \widetilde{F}_2(x,t))$.\ Then
   $$(p\times p')\circ \widetilde{F}=(p\circ \widetilde{F}_1, p'\circ \widetilde{F}_2)=(pr_1\circ F,pr_2\circ F)=F,$$
    and
     $$\widetilde{F}\circ J_0=(\widetilde{F}_1\circ J_0, \widetilde{F}_2\circ J_0)\simeq_{p\times p'} (pr_1\circ \widetilde{f}, pr_2\circ \widetilde{f})=\widetilde{f}.$$
   \end{proof}
 \end{proposition}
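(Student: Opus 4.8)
The plan is to exploit the universal property of the categorical product: project the lifting problem onto the two factors, solve each one using the fact that $p$ and $p'$ are h-fibrations, and then reassemble the solutions coordinatewise. So, let $p:E\to B$ and $p':E'\to B'$ be h-fibrations, and suppose we are given a space $X$ together with maps $\wt f:X\to E\times E'$ and $F:X\times I\to B\times B'$ with $(p\times p')\circ \wt f=F\circ J_0$.

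First I would push everything to the two factors. Composing with the projection $pr_1$ gives maps $pr_1\circ \wt f:X\to E$ and $pr_1\circ F:X\times I\to B$, and applying $pr_1$ to the hypothesis yields $p\circ(pr_1\circ\wt f)=(pr_1\circ F)\circ J_0$; symmetrically for $pr_2$, $p'$, $E'$, $B'$. Since $p$ is an h-fibration, wchp supplies a homotopy $\wt F_1:X\times I\to E$ with $p\circ \wt F_1=pr_1\circ F$ together with a fiber homotopy $H_1:\wt F_1\circ J_0\simeq_p pr_1\circ\wt f$; likewise $p'$ gives $\wt F_2:X\times I\to E'$ with $p'\circ \wt F_2=pr_2\circ F$ and $H_2:\wt F_2\circ J_0\simeq_{p'} pr_2\circ\wt f$.

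Then I would set $\wt F(x,t)=(\wt F_1(x,t),\wt F_2(x,t))$ and $H(x,t)=(H_1(x,t),H_2(x,t))$. One checks directly that $(p\times p')\circ\wt F=(p\circ\wt F_1,p'\circ\wt F_2)=(pr_1\circ F,pr_2\circ F)=F$, and that $H$ is an ordinary homotopy from $\wt F\circ J_0=(\wt F_1\circ J_0,\wt F_2\circ J_0)$ to $(pr_1\circ\wt f,pr_2\circ\wt f)=\wt f$. The one point deserving a line of verification is that $H$ is a \emph{fiber} homotopy with respect to $p\times p'$: this holds because $(p\times p')\circ H(x,t)=(p\circ H_1(x,t),\,p'\circ H_2(x,t))=(p\circ pr_1\circ\wt f(x),\,p'\circ pr_2\circ\wt f(x))=(p\times p')\circ\wt f(x)$ for all $t$, using that $H_1,H_2$ are fiber homotopies for $p,p'$; the same computation shows both ends agree, so indeed $\wt F\circ J_0\simeq_{p\times p'}\wt f$. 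Hence $p\times p'$ has wchp.

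I do not expect a genuine obstacle: once the factor maps are known to be h-fibrations the argument is a routine "do it coordinatewise." The only thing to be careful about is not to conflate ordinary homotopy with fiber homotopy when passing to the product — the product of two fiber homotopies is again a fiber homotopy precisely because the structure map of $E\times E'$ over $B\times B'$ is the product of the structure maps, and that is exactly what makes the final step work. (Together with the evident projection morphisms this also shows that $\mathrm{hFib}$ admits binary products.)
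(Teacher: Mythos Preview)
Your proof is correct and follows essentially the same approach as the paper: project the lifting problem onto the two factors, apply wchp to each, and reassemble the lifts and fiber homotopies coordinatewise. If anything, you are slightly more explicit than the paper in naming the fiber homotopies $H_1,H_2$ and verifying that their product $H$ is a fiber homotopy with respect to $p\times p'$, a step the paper asserts without spelling out.
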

 It is easy to see that product of two maps with wuphl is a map with wuphl. Hence we have the following result.
 \begin{proposition}
   The categories $\mathrm{hFib}$ and $\mathrm{hFibwu}$ have the product.
 \end{proposition}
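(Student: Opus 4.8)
The plan is to show that $\mathrm{hFib}$ and $\mathrm{hFibwu}$ have products by exhibiting the categorical product of two objects (topological spaces) $B$ and $B'$ as the usual topological product $B\times B'$, equipped with the projection morphisms. The key preliminary facts are already available: Proposition \ref{P3.4} tells us that the product $p\times p'$ of two h-fibrations is an h-fibration, and (as remarked just before this proposition) the product of two maps with wuphl has wuphl; moreover the projections $pr_1:B\times B'\to B$ and $pr_2:B\times B'\to B'$ are fibrations, hence h-fibrations, and they trivially have upl (each fiber is a single point, so every path in a fiber is constant, and by Proposition \ref{P2.14} they have upl, hence wuphl). So the projections are legitimate morphisms in both categories.

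First I would fix two objects $B,B'$ and consider the diagram $B\st{pr_1}{\longleftarrow} B\times B'\st{pr_2}{\longrightarrow} B'$ in $\mathrm{hFib}$ (respectively $\mathrm{hFibwu}$). Then, given any object $C$ together with morphisms $q:C\to B$ and $q':C\to B'$ (h-fibrations, with wuphl in the second case), I would define the mediating morphism $\langle q,q'\rangle:C\to B\times B'$ by $\langle q,q'\rangle(c)=(q(c),q'(c))$. The verification that $pr_1\circ\langle q,q'\rangle=q$ and $pr_2\circ\langle q,q'\rangle=q'$ is immediate, and uniqueness of a map $C\to B\times B'$ with these two compositions is the standard universal property of the topological product. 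The only genuine content is that $\langle q,q'\rangle$ is itself a morphism of the category, i.e.\ an h-fibration (with wuphl when appropriate); this does not follow directly from Proposition \ref{P3.4} since $\langle q,q'\rangle$ is not literally a product of two maps, so this is the step I expect to be the main obstacle.

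To handle that step I would use the composition results: observe that $\langle q,q'\rangle$ factors as $C\st{\Delta_C}{\longrightarrow} C\times C\st{q\times q'}{\longrightarrow} B\times B'$, where $\Delta_C$ is the diagonal. This reduces the problem to checking that the diagonal $\Delta_C:C\to C\times C$ is an h-fibration with wuphl, and then invoking Proposition \ref{P3.1} (composition of h-fibrations is an h-fibration) together with Proposition \ref{P3.2} for the wuphl version, after noting by Proposition \ref{P3.4} that $q\times q'$ is an h-fibration (with wuphl). Alternatively, and perhaps more cleanly, one can avoid discussing $\Delta_C$ altogether: since we only need the existence of a product in the categorical sense, it suffices to show that whenever $q$ and $q'$ are morphisms, the map $\langle q,q'\rangle$ is a morphism, and this can be proved directly by a small diagram chase imitating the proof of Proposition \ref{P3.4}, lifting against $q$ and $q'$ separately and assembling the two fiber homotopies coordinatewise. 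I would present whichever of these is shortest; in either case the argument is routine once Propositions \ref{P3.1}, \ref{P3.2} and \ref{P3.4} are in hand.

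Finally I would remark that the same construction works verbatim for the base-space categories $\mathrm{hFib(B)}$ and $\mathrm{hFibwu(B)}$ using the fibered product over $B$, and state explicitly that $B\times B'$ with its two projections is the product in $\mathrm{hFib}$ and in $\mathrm{hFibwu}$. The hard part, as noted, is purely the verification that the mediating morphism stays inside the category; everything else is the classical universal property.
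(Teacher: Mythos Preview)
There are genuine gaps. First, your claim that the projections $pr_1:B\times B'\to B$ have upl because ``each fiber is a single point'' is wrong: the fiber of $pr_1$ over $b$ is $\{b\}\times B'\cong B'$, so $pr_1$ has wuphl only when every loop in $B'$ is null, and in particular the projections need not be morphisms of $\mathrm{hFibwu}$ at all. Second, and more seriously, neither of your routes to showing that $\langle q,q'\rangle$ is an h-fibration works. The factorisation through the diagonal fails because $\Delta_C:C\to C\times C$ is essentially never an h-fibration: a homotopy $F:X\times I\to C\times C$ can be written as $\Delta_C\circ\wt F$ only if its two coordinate maps agree identically, and the equality $p\circ\wt F=F$ in the definition of wchp is an exact equality, not a homotopy. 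Your ``direct'' alternative also breaks down: lifting $pr_1\circ F$ along $q$ and $pr_2\circ F$ along $q'$ yields two homotopies $\wt F_1,\wt F_2$ that both land in the same space $C$, not in separate factors, so there is nothing to ``assemble coordinatewise'' (contrast the proof of Proposition~\ref{P3.4}, where the two lifts live in $E$ and $E'$). In fact $\langle q,q'\rangle$ need not be an h-fibration at all: with $C=B=B'=I$ and $q=q'=\mathrm{id}_I$ one gets $\langle q,q'\rangle=\Delta_I$, which fails wchp by the observation above.

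The paper's own argument consists solely of the sentence preceding the statement together with Proposition~\ref{P3.4}; the universal property is never addressed. Read charitably, the assertion is that the class of h-fibrations (resp.\ h-fibrations with wuphl) is closed under the product-of-maps construction $(p,p')\mapsto p\times p'$, which is exactly what Proposition~\ref{P3.4} and the wuphl remark establish, rather than that $\mathrm{hFib}$ has categorical products in the strict sense you are attempting to verify.
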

 To present products for $\mathrm{hFib(B)}$ and $\mathrm{hFibwu(B)}$, consider the Whitney sum of h-fibrations (with wuphl).\ If ${\{p_{j}:E_{j}\to B|j\in J}\}$ is an indexed collection of h-fibrations (with wuphl ) over the space $B$, define $$\oplus_{B,J}E_j={\{(e_{j})_{j}\in \sqcap_j{E_j}|e_{j}\in E_{j},\ \mathrm{and}\ p_{j}(e_{j})=p_{i}(e_{i}),\ \mathrm{for}\ i,j\in J}\}$$
and also define $$\oplus_{B,J}p_{j}:\oplus_{B,J}E_j\to B$$ $$(e_{j})_{j}\rightarrowtail p_{j}(e_{j}).$$
\begin{proposition} Let ${\{p_{j}:E_{j}\to B|j\in J}\}$ be an indexed collection of h-fibrations (with wuphl) on the space $B$.\ Then $\oplus_{B,J}p_{j}$ is an h-fibration (with wuphl).
\begin{proof}
Let $E:=\oplus_{B,J}E_j$ and $p:=\oplus_{B,J}p_{j}$.\ Also, let $\widetilde{f}:X\to E$ and $F:X\times I\to B$ be two maps such that $p\circ \widetilde{f}=F\circ J_0$.\ Then $\widetilde{f}=(\widetilde{f}_j)_j,\ j\in J$, where $\widetilde{f}_j:X\to E_j$ is the projection of $\widetilde{f}$ over the j-th component.\ By definition of $p$, $p_j\circ \widetilde{f}_j=F\circ J_0$ and since $p_j$ is an h-fibration, there exist homotopies $\widetilde{F}_j:X\times I\to E_j$ and $H_j:X\times I\to E_j$ such that $p_j\circ \widetilde{F}_j=F$ and $H_j:\widetilde{F}_j\circ J_0\simeq_{p_j}\widetilde{f}_j$.\ Since $p_j\circ \widetilde{F}_j=F=p_i\circ \widetilde{F}_i$, we can define $\widetilde{F}:X\times I\to E$ by $\widetilde{F}(x,t)=(\widetilde{F}_j(x,t))_j$.\ Hence, $$p\circ \widetilde{F}=p\circ (\widetilde{F}_j)_j=p_j\circ \widetilde{F}_j=F.$$
Also, since $p_j\circ H_j(x,t)=p_j\circ \widetilde{f}_j(x)=p_i\circ \widetilde{f}_i(x)=p_i\circ H_i(x,t)$, $Im (H)\subseteq E$ and so we can define $H:X\times I\to E$ by $H(x,t)=({H}_j(x,t))_j$.\ Now $H:\widetilde{F}\circ J_0\simeq_p\widetilde{f}$ since
 \begin{align*}
   &  H(x,0)=(H_j(x,0))_j=(\widetilde{F}_j\circ J_0(x))_j=(\widetilde{F}_j)_j\circ J_0(x)=\widetilde{F}\circ J_0(x), \quad \\
   &  H(x,1)=(H_j(x,1))_j=(\widetilde{f}_j(x))_j=\widetilde{f}(x), \quad \\
   &  p\circ H(x,t)=p\circ (H_j(x,t))_j=p_j\circ H_j(x,t)=p_j\circ (\widetilde{F}_j\circ J_0)(x)=\quad \\
   & (p_j\circ \widetilde{F}_j)\circ J_0(x)=(p\circ \widetilde{F})\circ J_0(x)=p\circ (\widetilde{F}\circ J_0)(x),\quad\\
   &  p\circ H(x,t)=p\circ (H_j(x,t))_j=p_j\circ H_j(x,t)=p_j\circ \widetilde{f}_j(x)=p\circ \widetilde{f}(x).
\end{align*}
Therefore $p$ is an h-fibration.\ Moreover, if every $p_j$ has wuphl, since the fibers of $p$ are the product of the fibers of $p_j$'s, then by Proposition \ref{P2.6}, $p$ has wuphl.
\end{proof}
\end{proposition}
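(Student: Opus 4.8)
The plan is to run the componentwise argument from Proposition \ref{P3.4}, the only new subtlety being that one must check the assembled maps stay inside the Whitney sum. Write $E:=\oplus_{B,J}E_j$ and $p:=\oplus_{B,J}p_j$, and suppose $\widetilde{f}:X\to E$ and $F:X\times I\to B$ satisfy $p\circ\widetilde{f}=F\circ J_0$. First I would decompose $\widetilde{f}=(\widetilde{f}_j)_j$ with $\widetilde{f}_j:=pr_j\circ\widetilde{f}:X\to E_j$; the defining condition of $E$ forces $p_j\circ\widetilde{f}_j=p_i\circ\widetilde{f}_i$ for all $i,j$, and this common value is $F\circ J_0$. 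Thus each $p_j$, being an h-fibration, supplies homotopies $\widetilde{F}_j:X\times I\to E_j$ and $H_j:X\times I\to E_j$ with $p_j\circ\widetilde{F}_j=F$ and $H_j:\widetilde{F}_j\circ J_0\simeq_{p_j}\widetilde{f}_j$.

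The heart of the proof is the observation that $p_j\circ\widetilde{F}_j=F=p_i\circ\widetilde{F}_i$, so $(x,t)\mapsto(\widetilde{F}_j(x,t))_j$ lands in $E$ and defines $\widetilde{F}:X\times I\to E$ with $p\circ\widetilde{F}=F$; and likewise $p_j\circ H_j(x,t)=p_j\circ\widetilde{f}_j(x)$ is independent of $j$, so $(x,t)\mapsto(H_j(x,t))_j$ lands in $E$ and defines $H:X\times I\to E$. Evaluating at the ends gives $H(x,0)=(\widetilde{F}_j\circ J_0(x))_j=\widetilde{F}\circ J_0(x)$ and $H(x,1)=(\widetilde{f}_j(x))_j=\widetilde{f}(x)$, while $p\circ H(x,t)=(p_j\circ H_j(x,t))_j=p\circ\widetilde{f}(x)$ is constant in $t$; hence $H:\widetilde{F}\circ J_0\simeq_p\widetilde{f}$, and $p$ has wchp.

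For the wuphl claim I would appeal to Proposition \ref{P2.6}. The fiber of $p$ over $b\in B$ is $\prod_j p_j^{-1}(b)$, so a loop in it is a tuple $(\omega_j)_j$ of loops, one in each $p_j^{-1}(b)$. If every $p_j$ has wuphl, then by Proposition \ref{P2.6} each $\omega_j$ admits a nullhomotopy $G_j$ inside $p_j^{-1}(b)$; the tuple $(G_j)_j$ is then a nullhomotopy of $(\omega_j)_j$ inside $p^{-1}(b)$, so Proposition \ref{P2.6} applies again to give wuphl for $p$.

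I do not expect a real obstacle: the entire content is the compatibility of the fiber-product construction with working factorwise, and the reason $\widetilde{F}$ and $H$ remain in the Whitney sum is exactly that the h-fibration data over the various factors is required to lie over the same $F$ (resp. the same $F\circ J_0$). The only points deserving care are the well-definedness and continuity of the assembled maps into $\sqcap_j E_j$ and the check that their images satisfy the compatibility relation defining $\oplus_{B,J}E_j$; no hypothesis on the cardinality of $J$ is needed.
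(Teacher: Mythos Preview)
Your proposal is correct and follows essentially the same argument as the paper: componentwise lifting via each $p_j$, checking that the assembled $\widetilde{F}$ and $H$ land in the Whitney sum because all the $p_j$-images agree with $F$ (resp.\ $F\circ J_0$), and then invoking Proposition~\ref{P2.6} on the product fibers for wuphl. Your wuphl paragraph is in fact slightly more explicit than the paper's one-line appeal to Proposition~\ref{P2.6}, but the route is identical.
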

The following result is a consequence of the above proposition.
\begin{theorem}
The categories $\mathrm{hFib(B)}$ and $\mathrm{hFibwu(B)}$ have products.\
\end{theorem}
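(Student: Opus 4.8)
The plan is to exhibit, for an indexed collection $\{p_j:E_j\to B\mid j\in J\}$ of objects of $\mathrm{hFib(B)}$ (resp.\ $\mathrm{hFibwu(B)}$), the Whitney sum $\oplus_{B,J}p_j:\oplus_{B,J}E_j\to B$ together with the canonical coordinate projections as a categorical product. Recall that in $\mathrm{hFib(B)}$ the objects are h-fibrations over $B$ and a morphism from $q:D\to B$ to $q':D'\to B$ is a map $\varphi:D\to D'$ with $q'\circ\varphi=q$, while $\mathrm{hFibwu(B)}$ is the subcategory whose objects are in addition required to have wuphl. So I must check three things: that the candidate object lies in the category, that the projections are morphisms, and that the universal property holds.

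First I would invoke the preceding proposition: it says precisely that $\oplus_{B,J}p_j$ is an h-fibration, and an h-fibration with wuphl when every $p_j$ has wuphl. Hence $\oplus_{B,J}p_j$ is an object of $\mathrm{hFib(B)}$ (resp.\ $\mathrm{hFibwu(B)}$). Next, for each $i\in J$ I would define $\pi_i:\oplus_{B,J}E_j\to E_i$ by $\pi_i((e_j)_j)=e_i$; this is continuous, being the restriction to $\oplus_{B,J}E_j$ of the $i$-th coordinate projection $\sqcap_j E_j\to E_i$, and it is fiber-preserving since $p_i\circ\pi_i((e_j)_j)=p_i(e_i)=\oplus_{B,J}p_j((e_j)_j)$ by the very definition of $\oplus_{B,J}E_j$. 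Thus each $\pi_i$ is a morphism in the relevant category.

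It then remains to verify the universal property. Given an object $q:D\to B$ of the category and morphisms $g_j:D\to E_j$ with $p_j\circ g_j=q$ for all $j\in J$, I would set $g(d)=(g_j(d))_j\in\sqcap_j E_j$. For every $d\in D$ and $i,j\in J$ one has $p_j(g_j(d))=q(d)=p_i(g_i(d))$, so $g(d)\in\oplus_{B,J}E_j$; hence $g$ corestricts to a map $D\to\oplus_{B,J}E_j$, which is continuous because $\oplus_{B,J}E_j$ carries the subspace topology of $\sqcap_j E_j$ and each $g_j$ is continuous. Since $\oplus_{B,J}p_j\circ g(d)=p_j(g_j(d))=q(d)$, the map $g$ is a morphism, and evidently $\pi_j\circ g=g_j$ for all $j$. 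For uniqueness, if $g':D\to\oplus_{B,J}E_j$ also satisfies $\pi_j\circ g'=g_j$ for all $j$, then the $j$-th coordinate of $g'(d)$ equals $g_j(d)$ for every $j$, so $g'(d)=(g_j(d))_j=g(d)$.

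I do not expect a genuine obstacle here: the whole force of the theorem is carried by the preceding proposition, which is what puts the candidate product object inside $\mathrm{hFib(B)}$ (resp.\ $\mathrm{hFibwu(B)}$) at all. The only points needing a moment's attention are that the mediating map $g$ really lands in the subspace $\oplus_{B,J}E_j$ — precisely where the compatibility $p_j\circ g_j=q$ enters — and that its continuity follows from the universal property of the topological product combined with the subspace topology on $\oplus_{B,J}E_j$.
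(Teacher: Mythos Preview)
Your proposal is correct and follows exactly the paper's intended approach: the paper states the theorem as an immediate consequence of the preceding proposition on Whitney sums and gives no further argument, so your write-up simply spells out the routine categorical verification that the paper leaves implicit.
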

 Suppose ${\{p_{j}:E_{j}\to B_j|j\in J}\}$ is an indexed collection of morphisms in $\mathrm{hFib}$ (or $\mathrm{hFibwu}$), $E:=\sqcup_j E_j$ and $B:=\sqcup_j B_j$ are disjoint union of $E_j$'s and $B_j$'s, respectively.\ Define $q:E\lo B$ by $q|_{E_j}=p_j$.\ Then $q$ is an h-fibration (with wuphl).\ Because let $\widetilde{f}:X\to E$ and $F:X\times I\to B$ be the maps such that $q\circ \widetilde{f}=F\circ J_0$.\ If $x_0\in X$, then there exists one and only one $j\in J$ such that $\widetilde{f}(x_0)\in E_j$ and $F\circ J_0(x_0)\in B_j$.\ Since $E_j$'s and $B_j$'s are disjoint, continuity of $\widetilde{f}$ and $F\circ J_0$ yields that for every $x\in X$ and every $t\in I$, $\widetilde{f}(x)\in E_j$, $F\circ J_0(x)\in B_j$ and $F(x,t)\in B_j$ which imply that $p_j\circ \widetilde{f}(x)=F\circ J_0(x)$.\ By assumption, there exist homotopies $\widetilde{F}_j:X\times I\to E_j$ and $H_j:X\times I\to E_j$ such that $p_j\circ \widetilde{F}_j=F$ and $H_j:\widetilde{F}_j\circ J_0\simeq_{p_j}\widetilde{f}$.\ Define $\widetilde{F}, H:X\times I\to E$ by $\widetilde{F}(x,t)=\widetilde{F}_j(x,t)$ and $H(x,t)=H_j(x,t)$, if $\widetilde{f}(x)\in E_j$.\ Therefore $q\circ \widetilde{F}(x,t)=q\circ \widetilde{F}_j(x,t)=p_j\circ \widetilde{F}_j(x,t)=F(x,t)$.\ Also, $H:\widetilde{F}\circ J_0\simeq_q\widetilde{f}$ because if $\widetilde{f}(x)\in E_j$, then
 \begin{align*}
    & H(x,0)=H_j(x,0)=\widetilde{F}_j\circ J_0(x),  \quad  \\
    &  H(x,1)=H_j(x,1)=\widetilde{f}(x),\quad\\
    & q\circ H(x,t)=q\circ H_j(x,t)=p_j\circ H_j(x,t)=p_j\circ(\widetilde{F}_j\circ J_0)(x)=\quad\\
    & (p_j\circ \widetilde{F}_j)\circ J_0(x)=(q\circ \widetilde{F})\circ J_0(x)=q\circ (\wt F\circ J_0)(x),\quad \\
   & q\circ H(x,t)=q\circ H_j(x,t)=p_j\circ H_j(x,t)=p_j\circ \widetilde{f}(x)=q\circ \widetilde{f}(x).
 \end{align*}
 Hence $q$ is an h-fibration.\ Moreover, if every $p_j$ has wuphl, since a fiber of $q$ is a fiber of one of ${p_j}^{,}s$, Proposition \ref{P2.6} follows that $q$ has wuphl.
 \\ Similarly, if ${\{p_{j}:E_{j}\to B|j\in J}\}$ is an indexed collection of objects in $\mathrm{hFib(B)}$ (or $\mathrm{hFibwu(B)}$), $q:\sqcup_{j} E_j\lo B$ defined by $q|_{E_j}=p_j$ is also an h-fibration (with wuphl), because it is sufficient that for every $j\in J$, let $B_j:=B$.\ Therefore, we have the following result.\
\begin{theorem}\label{T4.7}
The categories  $\mathrm{hFib}$, $\mathrm{hFibwu}$, $\mathrm{hFib(B)}$ and $\mathrm{hFibwu(B)}$ have coproducts.\
\end{theorem}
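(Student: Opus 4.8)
The final statement to prove is Theorem~\ref{T4.7}: the categories $\mathrm{hFib}$, $\mathrm{hFibwu}$, $\mathrm{hFib(B)}$ and $\mathrm{hFibwu(B)}$ have coproducts. The plan is to take the obvious candidate for a coproduct --- the disjoint union of total spaces mapping to the disjoint union of base spaces --- and verify both that this candidate is actually an object of the relevant category (i.e.\ that it is an h-fibration, and has wuphl when the summands do) and that it enjoys the universal property of a coproduct.

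First I would dispense with the h-fibration part, which is precisely the computation carried out in the paragraph immediately preceding the theorem: given $\{p_j:E_j\to B_j\}$, set $E=\sqcup_j E_j$, $B=\sqcup_j B_j$ and $q|_{E_j}=p_j$; the key observation is that since the $E_j$ (resp.\ $B_j$) are clopen in $E$ (resp.\ $B$), any map $\widetilde f:X\to E$ from a connected-component standpoint splits into pieces landing in a single $E_j$, and likewise $F:X\times I\to B$, so the wchp lifting problem for $q$ reduces to the wchp lifting problem for each $p_j$ separately, and the lifts $\widetilde F_j$, $H_j$ reassemble to $\widetilde F$, $H$. The wuphl assertion follows because each fiber of $q$ is literally a fiber of some $p_j$, so Proposition~\ref{P2.6} applies verbatim. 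For the $\mathrm{hFib(B)}$ and $\mathrm{hFibwu(B)}$ cases one simply takes all $B_j=B$ and composes with the fold map $B\sqcup\cdots\sqcup B\to B$; this is also already noted in the excerpt.

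Then I would establish the universal property. Equip each $E_j$ with the canonical inclusion $\iota_j:E_j\hookrightarrow E$; note $q\circ\iota_j=p_j$ interpreted correctly (in the over-$B$ case $\iota_j$ is a morphism of $\mathrm{hFib(B)}$ since it commutes with the projections to $B$; in the absolute case the morphism data is the pair of inclusions $E_j\hookrightarrow E$, $B_j\hookrightarrow B$). Given any h-fibration $r:Y\to Z$ together with morphisms $g_j$ from $p_j$ to $r$ (a compatible pair of maps $E_j\to Y$, $B_j\to Z$, or just $E_j\to Y$ over $B$ in the over-$B$ setting), the maps on total spaces glue to a unique continuous $g:E\to Y$ with $g\circ\iota_j=g_j$ by the universal property of disjoint union in $\mathrm{Top}$, and similarly on base spaces; the required square commutes because it commutes on each clopen piece $E_j$, and uniqueness is immediate since any competitor must restrict to $g_j$ on $E_j$ and the $E_j$ cover $E$. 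This shows $(E,q)$ with the $\iota_j$ is the coproduct $\sqcup_j p_j$ in each of the four categories.

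The main obstacle --- and it is a mild one --- is bookkeeping the morphisms in the ambient category $\mathrm{hFib}$: a morphism there is not a single map but a commutative square, so one must check that the glued pair of maps really does form a morphism, i.e.\ that the glued square commutes, and that the coproduct cocone $\iota_j$ consists of genuine morphisms. This is routine because commutativity of a diagram of maps out of a disjoint union can be checked componentwise, and each component is exactly the hypothesis $r\circ g_j^{\,\mathrm{tot}} = g_j^{\,\mathrm{base}}\circ p_j$. No homotopy-theoretic subtlety enters the universal-property half; all the real content is in the h-fibration verification, which has already been done before the statement of the theorem, so the proof of Theorem~\ref{T4.7} is essentially a one-line appeal to that computation plus the standard disjoint-union universal property in $\mathrm{Top}$.
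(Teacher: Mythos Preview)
Your treatment of $\mathrm{hFib(B)}$ and $\mathrm{hFibwu(B)}$ is correct and matches the paper: the pre-theorem computation shows that $q:\sqcup_j E_j\to B$ is an object of the category, and the universal property then reduces to that of disjoint unions in $\mathrm{Top}$, exactly as you say.

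There is, however, a genuine gap in your handling of $\mathrm{hFib}$ and $\mathrm{hFibwu}$: you have misread the paper's definition of these categories. In the paper, the objects of $\mathrm{hFib}$ are \emph{topological spaces} and the morphisms are \emph{h-fibrations} (this is why Proposition~\ref{P3.1} on composition is needed to even form the category). You instead treat the objects as h-fibrations $p_j:E_j\to B_j$ and the morphisms as commutative squares, and accordingly propose $q:\sqcup E_j\to\sqcup B_j$ as the coproduct with coprojections given by pairs of inclusions. That is a perfectly good category, but it is not the paper's $\mathrm{hFib}$.

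For the paper's $\mathrm{hFib}$, the coproduct of a family of spaces $X_j$ is the disjoint union $\sqcup_j X_j$, and two things must be checked: (i) the inclusions $\iota_j:X_j\hookrightarrow\sqcup_j X_j$ are h-fibrations (so that they are morphisms at all), and (ii) given h-fibrations $f_j:X_j\to W$, the glued map $f:\sqcup_j X_j\to W$ is again an h-fibration. Point (ii) is exactly the second pre-theorem computation (the ``all $B_j:=B$'' case), which you already invoke. Point (i) is easy --- each $\iota_j$ is in fact a genuine fibration, since any homotopy $F:Z\times I\to\sqcup_j X_j$ with $F(-,0)$ landing in the clopen summand $X_j$ must land entirely in $X_j$ --- but it is a step your write-up omits, and it is the step that replaces your square-commutativity bookkeeping. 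Once (i) and (ii) are in hand, uniqueness is inherited from $\mathrm{Top}$ as you argue.
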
\ \\
\textbf{References}

\end{document}